\newtheorem{theorem}{Theorem}[section]
\newtheorem{definition}[theorem]{Definition}
\newtheorem{lemma}[theorem]{Lemma}
\newtheorem{proposition}[theorem]{Proposition}
\newtheorem{rem}[theorem]{Remark}
\newtheorem{Cor}[theorem]{Corollary}
\numberwithin{equation}{section}
\newcommand\norm[1]{\lVert#1\rVert}
\newcommand\abs[1]{\lvert#1\rvert}
\begin{document}
\title{Transversality for Cyclic Negative Feedback Systems}

\author {
\\
Yi Wang\thanks{Partially supported by NSF of China No. 11371338 and 91130016.}
  $\,$ and Dun Zhou\\
School of Mathematical Science\\
 University of Science and Technology of China
\\ Hefei, Anhui, 230026, P. R. China
\\
}
\date{}

\maketitle

\begin{abstract}
 Transversality of stable and unstable manifolds of hyperbolic periodic trajectories is proved for monotone cyclic systems with negative feedback. Such systems in general are not in the category of monotone dynamical systems in the sense of Hirsch. Our main tool utilized in the proofs is the so-called cone of high rank. We further show that stable and unstable manifolds between a hyperbolic equilibrium and a hyperbolic periodic trajectory, or between two hyperbolic equilibria with different dimensional unstable manifolds also intersect transversely.
\end{abstract}

\section{Introduction}
Oscillations frequently occur and play a fundamental role
in biological systems and networks.
It has been widely observed that many biological oscillators have a cyclic structure consisting of negative feedback loops.
Such cyclic nature of interactions
appears in neural systems, cellular control systems and the description of cascades of enzimatic reactions coupled with gene
transcription (see e.g., \cite{Tsai,Fer,Hasting1977,EL}). Typical examples of cyclic negative feedback models include
the Goodwin oscillator, a well-studied model relevant to circadian
oscillations (\cite{Goodw}); the Repressilator,
a transcriptional negative feedback loop constructed
in Escherichia coli (\cite{EL,MuHo}); the Metabolator, a synthetic
metabolic oscillator (\cite{Fung}); and the Frzilator, a model
of the control of gliding motions in myxobacteria (\cite{Igo}), etc.

Consequently,  negative feedbacks which are embedded in a cyclic architecture, are believed to be the
underling principle for a system to admit oscillations in a fluctuating environment.
For such classes of models, many results can be found in the literature (see  e.g., \cite{Hasting1977,EL,FrT,Ton}). In particular, all the oscillator models previously introduced can be written in
an abstract form as
\begin{equation}\label{feedback-equation}
\begin{split}
\dot{x}_1 &=f_1(x_1,x_n),\\
\dot{x}_i &=f_i(x_i,x_{i-1}),\quad 2\leq i\leq n-1,\\
\dot{x}_n &=f_n(x_{n},x_{n-1}).\\
\end{split}
\end{equation}
where the nonlinearity $f=(f_1,f_2,\cdots,f_n)$, together with their partial derivatives with respect to $x_j$, are continuous in $\mathbb{R}^n$ and that there exists $\delta_i\in \{-1,1\}$, $1\leq i\leq n$, such that
\begin{equation}\label{feedback-condition}
\delta_i\frac{\partial f_i(x_i,x_{i-1})}{\partial x_{i-1}}>0,\quad \text{for all}\ (x_i,x_{i-1})\in \mathbb{R}^2.
\end{equation}
A remarkable result has been accomplished by
Mallet-Paret and Smith \cite{Mallet-Paret1990}: They have shown that the omega-limit
set of any bounded orbit of system \eqref{feedback-equation}-\eqref{feedback-condition}
 can be embedded in $\mathbb{R}^2$, and hence, the Poincar\'{e}-Bendixson property severely constrains possible dynamics of the system. Such insight confirms that a cyclic structure consisting of negative feedback loops is responsible for the emergence of oscillations in biological systems.

Following \cite{Mallet-Paret1990}, we call system \eqref{feedback-equation}-\eqref{feedback-condition} a monotone cyclic feedback system (MCFS). Let $\Delta=\delta_1 \delta_2\cdots \delta_n$, then there are two
types of MCFS depending on the
sign of $\Delta$.  If $\Delta=1$ (resp. $\Delta=-1$), then
system \eqref{feedback-equation}-\eqref{feedback-condition} is called a MCFS with positive
(resp. negative) feedback. A MCFS with positive
feedback ($\Delta=1$) is in particular a monotone dynamical system in the sense of Hirsch \cite{HiSm,Smi95} with respect to
certain usual convex cone and many classical
results for monotone dynamical systems contained in \cite{HiSm,Smi95} apply to \eqref{feedback-equation}-\eqref{feedback-condition}.  However, if $\Delta=-1$, such system is not monotone in the usual sense of Hirsch \cite{HiSm,Smi95}.

 In the theory of dynamical systems, transversality of stable and unstable
manifolds of critical elements plays a central role in connection with structural
stability (see e.g., \cite{Palis}). Despite this fact, there are not many results in the literature to verify if
transversality holds for a given dynamical system. Fusco and Oliva \cite{Fusco1987,fusco1990} have presented two classes of finite-dimensional cooperative ODE systems which possess the transversality. For scalar parabolic equations, Henry \cite{Hen} and Angenent \cite{Ang} have proved transversality of the invariant manifolds of stationary solutions  (see also Chen et al. \cite{Chen} for time-periodic cases) with separated boundary condition. For periodic boundary condition,
Czaja and Rocha \cite{CRo} have recently shown that the stable and
unstable manifolds of two hyperbolic periodic orbits always intersect transversally. The other automatic
transversality results have been completed in \cite{JR1,JR2}.

Going back to the MCFS \eqref{feedback-equation}-\eqref{feedback-condition}. When the feedback is positive
(i.e., $\Delta=1$), the main results in Fusco and Oliva \cite{fusco1990} may imply that any connecting orbit between two hyperbolic periodic orbits or between a hyperbolic periodic orbit and a hyperbolic equilibrium is automatically transversal.

However, it deserves to point out that all the aforementioned systems, in both finite-dimensional and infinite-dimensional settings, fall in the category of monotone dynamical systems in the sense of Hirsch \cite{HiSm,Smi95}. To the best of our knowledge, there are very few nontrivial explicit examples outside the category of monotone dynamical systems, where invariant manifolds of critical elements (particularly, of periodic orbits) are known to intersect transversely.

In this paper, we will extensively focus on system \eqref{feedback-equation}-\eqref{feedback-condition} with negative feedback ($\Delta=-1$).  Our main purpose is to show that this system admits transversality of stable and unstable
manifolds of critical elements. As we mentioned before, such system is not monotone in the usual sense of Hirsch. So, we presented here a class of explicit systems, not in the category of Hirsch \cite{HiSm,Smi95} but including many cyclic negative feedback biological models, for which ``transversality" property holds.

Our approach is motivated by the recent work of Sanchez \cite{San1,San2} on a newly-extended notion of monotone flows with respect to certain so-called cones of rank $k$. These cones were already considered by Fusco and Oliva \cite{fusco1991}  (see also Krasnoselskij et al. \cite{Kra} for infinite-dimensional settings). Such cones consist of straight lines and contain a $k$-dimensional linear subspace and no higher dimensional subspace. A usual convex cone $K$ (in the sense of Hirsch \cite{HiSm})
defines the generalized cone $K\cup (-K)$ which is of rank $1$.
For system \eqref{feedback-equation}-\eqref{feedback-condition} with negative feedback, Mallet-Paret and Smith \cite{Mallet-Paret1990} introduced an integer-valued
Lyapunov functional $N$. This function $N$ is not defined everywhere but
only on an open and dense subset of $\mathbb{R}^n$ on which it is also continuous. It is locally constant near points where it is defined and strictly decreasing as $t$ increases through points where it is not defined. The existence of $N$ enables us to present two modified functions of $N$ (see Lemma \ref{zero-property2}) and construct a family of nested cones, say $K_1\subset K_2\cdots \subset K_j$, of even rank (except that the largest cone $K_j$ is of odd rank when $n$ is an odd number), and obtain monotonicity of the system  with respect to these high-rank cones (see Proposition \ref{map-int}).

 In particular, if system \eqref{feedback-equation}-\eqref{feedback-condition} is linear, by virtue of the generalized Perron-Frobenius Theorem with respect to high-rank cones (\cite[Theorem 1]{fusco1991}, see also \cite{Kra} for the infinite dimensional settings), we are able to decouple $\mathbb{R}^n$ into many $2$-dimensional invariant subspaces $W_1,W_2,\cdots,W_j$ (When $n$ is odd, the last space $W_j$ is just $1$-dimensional) of the corresponding solution operator. Moreover, the growth rates of the solution operator on different invariant subspaces are strictly separated (see Lemma \ref{rootspace-arra}). As a consequence, we here generalize the Floquet theory established in Mallet-Paret and Smith \cite{Mallet-Paret1990} for time-periodic cases to general time-dependent cases by appealing a different approach.

Based on the theory obtained above and motivated by \cite{Fusco1987,fusco1990}, we are able to investigate transversality of stable and unstable
manifolds of critical elements of the system. More precisely, we will show that for any two hyperbolic periodic
orbits $\Gamma^{-}$ and $\Gamma^{+}$, the unstable manifold $W^u(\Gamma^{-})$ of $\Gamma^{-}$ and the stable manifold $W^s(\Gamma^{+})$ of $\Gamma^{+}$ will always intersect transversely (see Theorem \ref{transversality}). Moreover, such ``automatic" transversality will still hold if one of the two periodic orbits ($\Gamma^{+}$ or $\Gamma^{-}$) is replaced by a hyperbolic equilibrium. When considering  transversality between two hyperbolic equilibria, we show that if the dimensions of their unstable manifolds are different, then their corresponding stable and unstable manifolds will also intersect transversely.

This paper is organized as follows. In section 2, we first collect some properties of the integer-valued
Lyapunov function $N$ introduced in \cite{Mallet-Paret1990}; and then present two modified functions of $N$,
by which one can define the nested cones of high-rank so that the flow generated by \eqref{feedback-equation}-\eqref{feedback-condition} with negative feedback is monotone with respect to these high-rank cones. Moreover, if system \eqref{feedback-equation}-\eqref{feedback-condition} is linear, we generalize the Floquet theory in \cite{Mallet-Paret1990} for time-periodic cases to general time-dependent cases by the generalized Perron-Frobenius Theorem for high-rank cones. In section 3, we proved transversality of the stable and unstable manifolds of critical elements for system \eqref{feedback-equation}-\eqref{feedback-condition}.

\section{Cones of high-rank in Linear System}
In this section, we will introduce and investigate cones of high-rank for the linear negative feedback system
\begin{equation}\label{linearity-com-system}
\begin{split}
\dot{x}_1 &=a_{11}(t)x_1+a_{1n}(t)x_n,\\
\dot{x}_i &=a_{i,i-1}(t)x_{i-1}+a_{ii}(t)x_{i},\quad 2\leq i\leq n-1,\\
\dot{x}_n &=a_{n,n-1}(t)x_{n-1}+a_{nn}(t)x_n,\\
\end{split}
\end{equation}
with all the coefficient functions being continuous on $\mathbb{R}$ and satisfying the following condition:
\begin{equation}\label{positive-a-a}
a_{1,n}(t)<0\, \textnormal{ and }\,a_{i,i-1}(t)>0,\quad i=2,\cdots,n-1,
\end{equation}
for all $t\in \mathbb{R}$. Combining with the generalized Perron-Frobenius Theorem developed by \cite{fusco1991}, we will eventually split $\mathbb{R}^n$ into many invariant subspaces, whose dimension is no more than $2$, of the solution operator of system \eqref{linearity-com-system}. Hereafter, we always write the coefficient matrix as $A(t)=(a_{ij}(t))_{n\times n}$.

We now introducing an integer-valued Lyapunov function $N$ associated with \eqref{linearity-com-system}. From \cite{Mallet-Paret1990}, if we denote the set $\Lambda=\{x|x\in \mathbb{R}^n \ \text{and}\ x_i\neq 0,i=1,2,\cdots,n\}$, then one can define a continuous map $N$ on $\Lambda$, taking values in $\{0,1,2,\cdots,n\}$, by
\[
N(x)=\text{card} \{i|\delta_i x_i x_{i-1}<0\},
\]
while here $\delta_1=-1$ and $\delta_i=1,\ 2\leq i\leq n$.
 Henceforth, we let $\tilde{n}=n$ for $n$ is odd and $\tilde{n}=n-1$ for $n$ is even. Moreover, it follows from \cite{Mallet-Paret1990} that $$N(x)\in\{1,3,\cdots,\tilde{n}\}$$  for any $x\in \Lambda$.

Clearly, $\Lambda$ is open and dense in $\mathbb{R}^n$. Motivated by \cite{Fusco1987,fusco1990}, we now define two functions
\begin{equation*}
N_{m},N_M:\mathbb{R}^n\rightarrow \{1,3,\cdots,\tilde{n}\}
\end{equation*}
by letting $N_{m}(x)$, $N_M(x)$ be the minimum and maximum value of $N(x')$ for $x'\in \mathcal{U}\cap \Lambda$, where $\mathcal{U}$ being a small neighborhood of $x\in \mathbb{R}^n$. These two functions will then help us extend (continuously) the domain $\Lambda$ of $N$ to
\[
\mathcal{N}=\{x\in \mathbb{R}^n|N_m(x)=N_M(x)\}.
\]
Note that $\mathcal{N}$ is also open and dense in $\mathbb{R}^n$ and $\mathcal{N}$ is the maximal domain on which $N$ is continuous.
\begin{lemma}\label{zero-property}
Let $x(t)$ be a nontrivial solution of \eqref{linearity-com-system}. Then:
\begin{itemize}
\item [\emph{(i)}] $x(t)\in \mathcal{N}$ except at isolated values of $t$ and $N(x(t))$ is nonincreasing as $t$ increases with $x(t)\in \mathcal{N}$;
\item [\emph{(ii)}]If $x(t_0)\notin \mathcal{N}$, then for $\varepsilon>0$ small, one has $N(x(t_0+\varepsilon))<N(x(t_0-\varepsilon))$;
\item [\emph{(iii)}] There exists a $t_0>0$ such that $x(t)\in \mathcal{N}$ and $N(x(t))$ is constant for $t\in [t_0,+\infty)$ and for $t\in (-\infty,-t_0],$ respectively.

\end{itemize}
\end{lemma}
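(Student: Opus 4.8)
The plan is to localise everything at the \emph{exceptional times}, namely those $t_0$ at which $x(t_0)\notin\Lambda$; since $x$ is a nontrivial solution of the linear system \eqref{linearity-com-system} we have $x(t)\neq0$ for all $t$, so at such a $t_0$ only some, not all, of the coordinates vanish. Off the exceptional set $x(t)\in\Lambda$, where $t\mapsto N(x(t))$ is continuous and integer-valued, hence locally constant; more generally $N\circ x$ is continuous and locally constant on the open set $\{t:x(t)\in\mathcal N\}$. So (i)--(iii) reduce to two things: showing the exceptional times are isolated, and tracking how $N$ jumps across each one. This is the linear-system analogue of the discrete Lyapunov function lemma of \cite{Mallet-Paret1990}, and I would follow that circle of ideas.

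For the local picture, fix an exceptional $t_0$ and decompose the cyclic index set into maximal blocks of indices on which $x$ vanishes at $t_0$; a typical block is $\{i,i+1,\dots,i+p-1\}$ (indices mod $n$) with $1\le p<n$ and $x_{i-1}(t_0)\neq0$, $x_{i+p}(t_0)\neq0$. Because $A(t)$ is cyclic and lower bidiagonal, $\dot x_i(t_0)=a_{i,i-1}(t_0)x_{i-1}(t_0)\neq0$, so $x_i$ has a simple zero at $t_0$; then, representing $x_{i+k}$ through the variation-of-constants formula for the scalar equation $\dot x_{i+k}=a_{i+k,i+k-1}(t)x_{i+k-1}+a_{i+k,i+k}(t)x_{i+k}$ and inducting on $k$, one gets that $x_{i+k}$ has a zero of order exactly $k+1$ at $t_0$, the sign of its leading coefficient being $\operatorname{sign}(\dot x_i(t_0))\,\delta_{i+1}\cdots\delta_{i+k}$ (and $(-1)^{k+1}$ times that on the left of $t_0$); here one uses $\operatorname{sign}(a_{j,j-1}(t_0))=\delta_j$, which is exactly \eqref{positive-a-a}. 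This uses only continuity of the $a_{ij}$, and in particular it shows each component has isolated zeros, so the exceptional set is discrete; this is the first clause of (i).

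Next, for $\varepsilon>0$ small I would read off from this sign pattern the terms of $N(x(t_0\pm\varepsilon))$ whose index lies in $\{i,\dots,i+p\}$: the terms with index $i,i+1,\dots,i+p-1$ all equal $0$ at $t_0+\varepsilon$ and all equal $1$ at $t_0-\varepsilon$, while the boundary term with index $i+p$ equals some $b^+\in\{0,1\}$ at $t_0+\varepsilon$ and some $b^-\in\{0,1\}$ at $t_0-\varepsilon$, with $b^-=b^+$ if $p$ is even and $b^-=1-b^+$ if $p$ is odd. Since distinct blocks involve disjoint indices and the remaining terms of $N$ are locally constant,
\[
N(x(t_0+\varepsilon))-N(x(t_0-\varepsilon))=\sum_{\text{blocks}}\big((b^+-b^-)-p\big)\le 0,
\]
with equality precisely when every block satisfies $p=1$ and $b^+=1$. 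If the inequality is strict we have a strict drop, and a short perturbation count of the card formula at $x(t_0)$ (perturbing the vanishing coordinates inside a block with $p\ge2$, or inside a $p=1$ block with $b^+=0$) realises two distinct values of $N$ arbitrarily close to $x(t_0)$, so $N_m(x(t_0))<N_M(x(t_0))$, i.e.\ $x(t_0)\notin\mathcal N$; this gives (ii). If equality holds, each block is a transversal zero of some $x_i$ with feedback-compatible sign, and using $\operatorname{sign}(\dot x_i(t_0))=\delta_i\operatorname{sign}(x_{i-1}(t_0))$ one checks that the two one-sided perturbations of that coordinate leave $N$ unchanged (exactly one of the two affected terms switches on as the other switches off), whence $x(t_0)\in\mathcal N$ and $N(x(t_0\pm\varepsilon))=N(x(t_0))$; combined with local constancy of $N\circ x$ on $\{x(t)\in\mathcal N\}$ this gives the monotonicity in (i). Finally (iii) is formal: $N\circ x$ is defined off a discrete set, constant on each complementary interval, strictly decreasing across each exceptional time at which $x\notin\mathcal N$, and integer-valued with $1\le N\le\tilde{n}$; hence it has only finitely many jumps on $[0,\infty)$ and on $(-\infty,0]$, is eventually constant as $t\to+\infty$ and as $t\to-\infty$, and one takes any $t_0>0$ beyond the last jump on each side.

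The main obstacle is the middle step: extracting the precise order and leading sign of the zero of each component along a block from only continuity of the coefficients (whence the variation-of-constants representation and the induction), and then threading that sign data through the combinatorial count of $N$ on both sides of $t_0$ --- in particular keeping track of the parity of each block length and pinning down the single degenerate configuration ($p=1$, $b^+=1$) in which $x(t_0)$ remains in $\mathcal N$. Once this sign pattern is in hand, the count is elementary bookkeeping, the discreteness of the exceptional set is immediate, and (iii) needs no further input.
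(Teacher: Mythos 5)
Your argument is correct and follows essentially the same route as the paper: parts (i)--(ii) are exactly the local leading-order/sign analysis of \cite[Proposition 1.1]{Mallet-Paret1990}, which the paper simply cites (and which the paper itself reproduces, block by block with the same order-of-zero induction via Lemma \ref{perturb}, in its proof of Lemma \ref{zero-property2}), and your derivation of (iii) from the finitely many possible strict drops of the integer-valued $N$ is identical to the paper's. The only cosmetic slip is attributing (ii) to the implication ``strict drop $\Rightarrow x(t_0)\notin\mathcal N$''; (ii) is the contrapositive of your other implication ``no drop $\Rightarrow x(t_0)\in\mathcal N$'', but since you prove both directions the logic is complete.
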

\begin{proof}
See \cite[Proposition 1.1]{Mallet-Paret1990} for (i) and (ii). It follows from (i) and (ii) that $N(x(t))$ can drop to a lower value only finitely many times, which implies (iii).
\end{proof}
Moreover, we have the following additional property of the relation between $N$ and $N_m$ (resp. $N_M$).
\begin{lemma}\label{zero-property2}
Let $x(t)$ be a nontrivial solution of \eqref{linearity-com-system}.
If $x(t_0)\notin\mathcal{N}$, then for $\varepsilon>0$ small enough, one has
\begin{equation}\label{min-max}
N(x(t_0+\varepsilon))=N_m(x(t_0))\,\,
    \textnormal{ and }\,\,N(x(t_0-\varepsilon))=N_M(x(t_0)).
\end{equation}
\end{lemma}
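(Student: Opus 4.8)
The plan is to analyze what happens as $t$ crosses the exceptional value $t_0$ by exploiting the structure of $N$ as a count of sign-agreements among the coordinates $x_i(t)$, together with the defining feedback sign conditions \eqref{positive-a-a}. First I would recall from Lemma~\ref{zero-property}(i)--(ii) that $x(t_0+\varepsilon)$ and $x(t_0-\varepsilon)$ both lie in $\mathcal{N}$ (indeed in $\Lambda$) for all sufficiently small $\varepsilon>0$, and that $N$ is constant on each of the two punctured one-sided intervals $(t_0-\varepsilon,t_0)$ and $(t_0,t_0+\varepsilon)$. Thus the two numbers $N(x(t_0+\varepsilon))$ and $N(x(t_0-\varepsilon))$ are well-defined, independent of the small $\varepsilon$, and by part (ii) satisfy the strict inequality $N(x(t_0+\varepsilon))<N(x(t_0-\varepsilon))$. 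The claim \eqref{min-max} is that these two stabilized values are exactly the min and max of $N$ over a punctured neighborhood of $x(t_0)$ in $\mathbb{R}^n$, i.e.\ $N_m(x(t_0))$ and $N_M(x(t_0))$.

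The key step is to show that the values $N$ takes on all of $\mathcal{U}\cap\Lambda$, for $\mathcal{U}$ a small ball around $x(t_0)$, are trapped between $N(x(t_0+\varepsilon))$ and $N(x(t_0-\varepsilon))$; combined with the fact that both bounding values are themselves attained at points of $\mathcal{U}\cap\Lambda$ (namely $x(t_0\pm\varepsilon)$, which lie in $\mathcal{U}$ once $\varepsilon$ is small, by continuity of $t\mapsto x(t)$), this forces $N_m(x(t_0))=N(x(t_0+\varepsilon))$ and $N_M(x(t_0))=N(x(t_0-\varepsilon))$. To establish the trapping, I would look at which coordinates of $x(t_0)$ vanish. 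Since $x(t_0)\notin\mathcal N$ but $x(t_0)\ne 0$ (the solution is nontrivial, so by uniqueness it never vanishes), the zero set $Z=\{i: x_i(t_0)=0\}$ is a nonempty proper subset of $\{1,\dots,n\}$. For $i\notin Z$, the sign of $x_i(x')$ is locally constant near $x(t_0)$, so those coordinates contribute a fixed amount to $N$ on all of $\mathcal U\cap\Lambda$. The variable part of $N$ comes entirely from the terms indexed by $i$ with $i\in Z$ or $i-1\in Z$. The point is that the solution $x(t)$ itself realizes, just after $t_0$, the configuration of signs of the formerly-zero coordinates that minimizes these variable terms, and just before $t_0$ the one that maximizes them — this is precisely the content of the ``strictly decreasing as $t$ passes through $t_0$'' phenomenon and is where the feedback conditions \eqref{positive-a-a} enter: they control, via $\dot x_i = a_{i,i-1}(t)x_{i-1}+a_{ii}(t)x_i$, the sign of $x_i(t)$ for $t$ slightly larger (resp.\ smaller) than $t_0$ when $x_i(t_0)=0$, in terms of the sign of $x_{i-1}(t_0)$ (or, if $x_{i-1}(t_0)$ also vanishes, of the first nonvanishing derivative, iterating down the chain). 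One then checks by a direct sign bookkeeping that any perturbed point $x'\in\mathcal U\cap\Lambda$ has each variable block contributing at least as much as it does for $x(t_0+\varepsilon)$ and at most as much as for $x(t_0-\varepsilon)$.

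I expect the main obstacle to be the case of \emph{consecutive} vanishing coordinates at $t_0$, i.e.\ when $Z$ contains a run $i, i+1, \dots, i+\ell$. For an isolated zero $x_i(t_0)=0$ with $x_{i-1}(t_0)\ne 0$, the sign of $x_i(t)$ for $t$ near $t_0$ is immediately read off from $\mathrm{sgn}(a_{i,i-1}(t_0)x_{i-1}(t_0))$ and the sign of $t-t_0$, which pins down the one-sided values of $N$ cleanly. With a run of zeros one must differentiate the feedback relation repeatedly along the chain to determine the leading-order behavior of each $x_{i+k}(t)-x_{i+k}(t_0)$, and argue that the resulting sign pattern for $t>t_0$ (resp.\ $t<t_0$) is exactly the extremal one among all nearby sign patterns in $\Lambda$. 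Handling this carefully — essentially a finite induction down each maximal run of consecutive indices in $Z$, using \eqref{positive-a-a} at each step — is the technical heart; once it is done, assembling the block-wise inequalities to get $N(x(t_0+\varepsilon))=N_m(x(t_0))$ and $N(x(t_0-\varepsilon))=N_M(x(t_0))$ is routine. Alternatively, if the authors have already recorded (in \cite{Mallet-Paret1990}) a lemma describing precisely how the sign pattern of $x(t)$ changes across a dropping time of $N$, one can quote it to bypass the hands-on derivative computation and reduce the proof to the combinatorial trapping argument of the previous paragraph.
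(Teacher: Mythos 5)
Your plan is essentially the paper's proof: both arguments determine, to leading order in $t-t_0$, the sign of each vanishing coordinate by propagating the sign of the nearest preceding nonvanishing coordinate through the chain (the paper does this via the elementary Lemma \ref{perturb}, which gives $y(t)=\frac{g_m}{m+1}t^{m+1}+o(t^{m+1})$ for the inhomogeneous linearized equation, iterated along each maximal run of zero indices to produce the explicit leading term \eqref{sum-x-j-ex}), and then count sign agreements block by block to conclude that the $t>t_0$ sign pattern is the minimizing one and the $t<t_0$ pattern the maximizing one among nearby points of $\Lambda$. The one substantive point your outline passes over, and which the paper isolates as a separate case, is that the maximal runs of consecutive zero indices are integer segments \emph{mod} $n$, so one of them may wrap around through index $1$. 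For that segment the propagation from $x_n$ to $x_1$ carries the negative coefficient $a_{1,n}(t_0)<0$, so the coordinates lying on the far side of the wrap acquire the opposite sign to those before it; this is compensated exactly by $\delta_1=-1$ in the definition of $N$, and only after checking that these two sign flips cancel in the count does one get that this segment, too, contributes no increase for $t>t_0$ and the maximal increase for $t<t_0$. Your ``finite induction down each maximal run $i,i+1,\dots,i+\ell$ using \eqref{positive-a-a}'' is phrased as if every run were a linear chain with positive couplings, so the argument for the block containing index $1$ is incomplete as written; with that case added, your trapping-and-attainment framing, the treatment of isolated zeros, and the iterated differentiation along runs all match the paper's proof.
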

Before proving this lemma, we need the following technical lemma.
\begin{lemma}\label{perturb}
Let $y(t)$ be the solution of
\begin{equation*}\label{perturb-equation}
  \dot{y}=B(t)y+g(t),\quad y(0)=0,
\end{equation*}
where  $B(t)$ is a continuous $n\times n$ matrix function and $g(t)$ is a continuous $n$-vector valued function satisfying
$g(t)=g_mt^m+o(t^m)$, as $t\to 0$. Here $g_m\in \mathbb{R}^n$ and $m$ is a nonnegative integer. Then, one has
\[
y(t)=\frac{g_m}{m+1}t^{m+1}+o(t^{m+1}),\,\, \textnormal{ as } t\to 0.
\]
\end{lemma}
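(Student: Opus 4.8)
The plan is to represent $y$ by the variation-of-parameters (Duhamel) formula and then estimate the resulting integral directly. Let $\Phi(t,s)$ denote the state transition matrix of the homogeneous equation $\dot z=B(t)z$, which exists and is continuous in $(t,s)$ since $B$ is continuous, with $\Phi(s,s)=I$; then $y(t)=\int_0^t\Phi(t,s)g(s)\,ds$. Because $\Phi$ is continuous and $\Phi(0,0)=I$, I would write $\Phi(t,s)=I+E(t,s)$ with $\norm{E(t,s)}\to0$ as $(t,s)\to(0,0)$. From $g(s)=g_ms^m+o(s^m)$ I also write $g(s)=g_ms^m+r(s)$ with $r(s)=o(s^m)$, so that near $0$ there is a crude bound $\abs{g(s)}\le C\abs s^m$.

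The next step is to split
\[
y(t)=g_m\int_0^t s^m\,ds+\int_0^t r(s)\,ds+\int_0^t E(t,s)g(s)\,ds=\frac{g_m}{m+1}t^{m+1}+\int_0^t r(s)\,ds+\int_0^t E(t,s)g(s)\,ds,
\]
and to check that the last two integrals are $o(t^{m+1})$. For the middle one: given $\eps>0$, pick $\delta>0$ with $\abs{r(s)}\le\eps\abs s^m$ for $\abs s\le\delta$, so $\bigl|\int_0^t r(s)\,ds\bigr|\le\tfrac{\eps}{m+1}\abs t^{m+1}$ when $\abs t\le\delta$. For the last one: for $\abs t$ small enough, $\norm{E(t,s)}\le\eps$ for all $s$ between $0$ and $t$, hence $\bigl|\int_0^t E(t,s)g(s)\,ds\bigr|\le\eps C\int_0^{\abs t}s^m\,ds=\tfrac{\eps C}{m+1}\abs t^{m+1}$. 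Since $\eps$ is arbitrary, both terms are $o(t^{m+1})$, which gives the claim; writing everything with $\abs t$ treats the limits $t\to0^+$ and $t\to0^-$ at once. An essentially equivalent route avoiding transition matrices: first apply Gronwall's inequality to $\abs{y(t)}\le M\int_0^{\abs t}\abs{y(s)}\,ds+\tfrac{C}{m+1}\abs t^{m+1}$ to get the crude bound $y(t)=O(t^{m+1})$, then substitute it back into $y(t)=\int_0^t B(s)y(s)\,ds+\int_0^t g(s)\,ds$, where now $\int_0^t B(s)y(s)\,ds=O(t^{m+2})=o(t^{m+1})$ and $\int_0^t g(s)\,ds=\frac{g_m}{m+1}t^{m+1}+o(t^{m+1})$.

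I do not expect a genuine obstacle: the lemma is a soft statement about the leading-order asymptotics of solutions of a linear ODE, and the proof is a few lines of variation of parameters plus bookkeeping with Landau symbols. The only points deserving care are making the ``integral of $o(s^m)$ against a bounded kernel over $[0,t]$ is $o(t^{m+1})$'' estimates uniform in the relevant range (handled by the $\eps$--$\delta$ argument above) and remembering that the conclusion is two-sided as $t\to0$, which the absolute-value bookkeeping handles automatically. The degenerate case $g_m=0$ is included with no change.
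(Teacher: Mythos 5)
Your proof is correct, and it is genuinely more detailed than what the paper offers: the paper's entire ``proof'' of this lemma is a one-line appeal to L'Hospital's rule (with a pointer to p.~374 of Mallet-Paret--Smith), whereas you give a self-contained variation-of-parameters argument. The L'Hospital route the authors have in mind is to compute $\lim_{t\to 0} y(t)/t^{m+1}=\lim_{t\to 0}\dot y(t)/((m+1)t^m)=\lim_{t\to 0}\bigl(B(t)y(t)+g(t)\bigr)/((m+1)t^m)$; the $g$ term gives $g_m/(m+1)$ directly, but one still needs $B(t)y(t)=o(t^m)$, i.e.\ a preliminary estimate $y(t)=o(t^m)$, which for $m\ge 1$ requires a short bootstrap (iterating L'Hospital, or exactly the Gronwall bound $y(t)=O(t^{m+1})$ you mention in your alternative route). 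Your Duhamel decomposition $\Phi(t,s)=I+E(t,s)$ handles this cleanly and uniformly in one pass, and your $\varepsilon$--$\delta$ bookkeeping for the two error integrals is accurate, including the two-sided limit $t\to 0^\pm$. In short: both approaches are elementary and valid; yours is the more explicit and self-contained, while the paper's buys brevity at the cost of leaving the $B(t)y(t)$ estimate implicit.
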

\begin{proof}
This lemma is directly from the L'Hospital principle. (See also \cite[p.374]{Mallet-Paret1990}).
\end{proof}

\begin{proof}[Proof of Lemma \ref{zero-property2}]
Without loss of generality we assume that $t_0=0$. We first consider the case that solution $x(t)$ with initial value $x(0)=(x_1,x_2,\cdots,x_n)$, where $x_1\neq 0$ and $x_i=0$ for $i=2,\cdots,n$. For each $i=2,\cdots,n$, the equation
\[
\dot{x}_i(t)=a_{i,i-1}(t)x_{i-1}(t)+a_{i,i}(t)x_i(t)
\]
satisfies the assumptions in Lemma \ref{perturb}. Therefore,
$$x_2(t)=a_{2,1}(0)x_1(0)t+o(t),\,\,\textnormal{ as } t\to 0.$$
After the iteration in the corresponding equations, we obtain that
\begin{equation*}
x_i(t)=\frac{(\prod_{j=2}^{i}a_{j,j-1}(0))\cdot x_1(0)\cdot t^{i-1}}{(i-1)!}+o(t^{i-1}),\,\,\textnormal{ as } t\to 0,
\end{equation*}
for each $2\leq i\leq n$.
Since $a_{i,i-1}(0)>0$ for $2\leq i\leq n$, it is clear that $x_i(t)$ shares the same symbol with $x_1(0)$, and hence,
$x(t)\in \Lambda$ for all $t>0$ small enough. This implies that $N(x(t))=1$ for $t>0$ small enough. Since $N(x)\geq 1$ for any $x\in\Lambda$, we have $N(x(t))=N_m(x(0))$ for $t>0$ small. On the other hand, for $t<0$ with $\abs{t}$ small enough, the symbol of $x_i(t)$ will change
alternately with respect to the index $i$. As a consequence, $N(x(t))=N_M(x(0))=\tilde{n}$, for $t<0$ with $\abs{t}$ small. So, we have proved this lemma for the special case of $x(0)=(x_1,0,\cdots,0)$.
By repeating the argument above, one can obtain this lemma for the case of $x(0)=(0,\cdots,x_j,\cdots,0)$ with $x_j\neq 0$.

We now consider the general case. Given any index $j\ge 1$ with $x_j(0)=0$ and any index $i$ with $x_i(0)\ne 0$, one can follow the same argument as in the paragraphs above to obtain that, for $\abs{t}>0$ small, the symbol of $x_j(t)$ can be determined by the
symbol of the sum
\begin{equation}\label{sum-x-j-ex}
\left[\sum_{\substack{1\le i<j \\x_i(0)\ne 0}}(\prod_{k=i}^{j-1}a_{k+1,k}(0))\cdot x_i(0)\cdot t^{j-i}\right]+c_j\cdot\left[\sum_{\substack{j< i\le n \\x_i(0)\ne 0}}(\prod_{k=i}^{n-1}a_{k+1,k}(0))\cdot x_i(0)\cdot t^{n+j-i}\right],
\end{equation}
where $c_j=a_{1,n}(0)\cdot\prod_{l=1}^{j-1}a_{l+1,l}(0)$. Here, we set $\prod_{k=n}^{n-1}a_{k+1,k}(0)=1.$

Based on \eqref{sum-x-j-ex}, one can define an index set
$J:=\{j:x_j(0)=0\}$. Note that $x(0)\ne 0$ and $x(0)\notin \mathcal{N}$. Then $J$ is a nonempty proper subset of
$\{1,\cdots,n\}$. Now we partition $J$ into a finite union of pairwise disjoint integer segments $J_1,\cdots, J_m$ (mod $n$, e.g., $J_1=\{n-1,n,1,2\}$). For each $J_s$, one may write $J_s=\{j_s,j_s+1,\cdots, j_s+n_s\}$ (indices mod $n$), and then $j_s-1,j_s+n_s+1\notin J$.

We first consider the case (i): $1\notin J$. In this case, for any $J_s$ and any $j\in J_s$, it follows from \eqref{sum-x-j-ex} that the symbol ${\rm sgn}[x_j(t)]$ of $x_j(t)$ (choose $\abs{t}>0$ smaller, if necessary) is determined by $(\prod_{k=j_s-1}^{j-1}a_{k+1,k}(0))\cdot x_{j_s-1}(0)\cdot t^{j-j_s+1}$. Note that $1\notin J_s$. Then $\prod_{k=j_s-1}^{j-1}a_{k+1,k}(0)>0$, which implies that ${\rm sgn}[x_j(t)]$ is determined by $x_{j_s-1}(0)\cdot t^{j-j_s+1}$.
As a consequence, if $t>0$ is sufficiently small, then ${\rm sgn}[x_j(t)]={\rm sgn}[x_{j_s-1}(0)]$ for any $j\in J_s$. This entails that, for $t>0$ small enough, $J_s$ contributes no increase for $N$ in the neighborhood of $x(0)$. Due to arbitrariness of $J_s$, it then follows that $N(x(t))=N_m(x(0))$ for $t>0$ sufficiently small. On the other hand, if $t<0$ is sufficiently small, then ${\rm sgn}[x_j(t)]=(-1)^{j-j_s+1}\cdot{\rm sgn}[x_{j_s-1}(0)]$ for any $j\in J_s$. Therefore, for $t<0$ small enough,  $J_s$ contributes the largest increase for $N$ in the neighborhood of $x(0)$. So $N(x(t))=N_M(x(0))$ for $t<0$ sufficiently small.
Thus, \eqref{min-max} has been verified in this case.

Now consider the case (ii): $1\in J$. In this case, there is a unique $s_*$ such that $1\in J_{s_*}$. For any
integer segment $J_s$ of $J$ satisfying $1\notin J_s$, one can repeat the same argument in the previous paragraph and obtain that, for $t>0$ small enough, $J_s$ contributes no increase for $N$ in the neighborhood of $x(0)$; and for $t<0$ small enough,  $J_s$ contributes the largest increase for $N$ in the neighborhood of $x(0)$.

So, it suffices to consider $J_{s_*}$. We write $$J_{s_*}=\{j_{s_*},j_{s_*}+1,j_{s_*}+2,\cdots,1,\cdots, j_{s_*}+n_{s_*}\} \,\,\textnormal{(indices mod} \,n\textnormal{)}.$$ Following such notation, we define a subset $R\subset J_{s_*}$ as $R=\{j\in J_{s_*}|j=1,\text{ or }j\text{ is on ``the right side" of }1\}$. If $j\in R$ then $j<j_{s_*}-1\le n$. Together with $j_{s_*}-1\notin J$, it then follows from \eqref{sum-x-j-ex} that ${\rm sgn}[x_j(t)]$ is determined by the symbol of $c_j\cdot(\prod_{k=j_{s_*}-1}^{n-1}a_{k+1,k}(0))\cdot x_{j_{s_*}-1}(0)\cdot t^{n+j-(j_{s_*}-1)}$
whenever $\abs{t}$ is sufficiently small. Then \eqref{positive-a-a} implies that,
for any $\abs{t}$ sufficiently small,
\begin{eqnarray}\label{111}
{\rm sgn}[x_j(t)]=\left\{
\begin{split}
 -{\rm sgn}[x_{j_{s_*}-1}(0)],\,\text{ if }j\in R\text{ and }t>0;\\
 (-1)^{n+j-j_{s_*}}{\rm sgn}[x_{j_{s_*}-1}(0)],\,\text{ if }j\in R\text{ and }t<0,
\end{split}\right.
\end{eqnarray}
If $j\in J_{s_*}\setminus R$, then $1<j_{s_*}-1<j$. Again by \eqref{sum-x-j-ex}, we obtain that ${\rm sgn}[x_j(t)]$ is determined by the symbol of $(\prod_{k=j_{s_*}-1}^{j-1}a_{k+1,k}(0))\cdot x_{j_{s_*}-1}(0)\cdot t^{j-j_{s_*}+1}$
whenever $\abs{t}$ is small. Thus,
\begin{eqnarray}\label{112}
{\rm sgn}[x_j(t)]=\left\{
\begin{split}
 {\rm sgn}[x_{j_{s_*}-1}(0)],\,\text{ if }j\in J_{s_*}\setminus R\text{ and }t>0;\\
 (-1)^{j-j_{s_*}+1}{\rm sgn}[x_{j_{s_*}-1}(0)],\,\text{ if }j\in J_{s_*}\setminus R\text{ and }t<0;
\end{split}\right.
\end{eqnarray}
for any $\abs{t}$ sufficiently small.

Therefore, if $t>0$ is sufficiently small, then ${\rm sgn}[x_j(t)]={\rm sgn}[x_{j_{s_*}-1}(0)]$ for $j\in J_{s_*}\setminus R$, and ${\rm sgn}[x_j(t)]=-{\rm sgn}[x_{j_{s_*}-1}(0)]$ for $j\in R$. Noticing that $\delta_1=-1$ and $\delta_i=1 (2\leq i\leq n)$ in the definition of $N$,  one obtains that, for $t>0$ sufficiently small,
$J_{s_*}$ contributes no increase for $N$ in the neighborhood of $x(0)$. Similarly, by virtue of the expression of ${\rm sgn}[x_j(t)]$ in \eqref{111}-\eqref{112}, $J_{s_*}$ contributes the largest increase for $N$ in the neighborhood of $x(0)$, for $t<0$ is sufficiently small.

As a consequence, for case (ii), we have also obtained that $N(x(t))=N_m(x(0))$ for $t>0$ sufficiently small and  $N(x(t))=N_M(x(0))$ for $t<0$ sufficiently small. Thus, we have completed the proof.
\end{proof}

Motivated by  \cite{Fusco1987}, for any given integer $0\leq h\leq \frac{\tilde{n}+1}{2}$, let $K_h$ and $K^h$ be the sets
\begin{eqnarray*}
\begin{split}
 K_h=\{0\}\cup\{x\in \mathbb{R}^n: N_M(x)\leq 2h-1\},\\
 K^h=\{0\}\cup\{x\in \mathbb{R}^n:N_m(x)> 2h-1\}.
\end{split}
\end{eqnarray*}
In particular, we set $K_0=\{0\}$ and $K^{0}=\mathbb{R}^n$. It is not difficult to see that $K_h\setminus \{0\}$ and $K^h\setminus \{0\}$ are open sets, $K_h \cap K^h=\{0\}$ and the closure $\overline{K_h\cup K^h}=\mathbb{R}^{n}$.

Hereafter, we denote by $\bar{K}_h$ (resp. $\bar{K}^h$) the closure of $K_h$ (resp. $K^h$), by ${\rm Int}\bar{K}_h$ the interior of $\bar{K}_h$. Since $0\in \overline{(K_h\setminus \{0\})}$, $\bar{K}_h=\overline{(K_h\setminus \{0\})}$. Recall that $K_h\setminus \{0\}$ is an open set, we have ${\rm Int}\bar{K}_h=K_h\setminus \{0\}$.
\begin{proposition}\label{map-int}
Let $\Phi(t)$ be a fundamental matrix of \eqref{linearity-com-system} with $\Phi(0)=I$. Then for any $t>0$, one has
\begin{equation*}
  \Phi(t)(\bar{K}_h\setminus\{0\})\subset {\rm Int}\bar{K}_h.
\end{equation*}
\end{proposition}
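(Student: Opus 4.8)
The plan is to translate the geometric inclusion $\Phi(t)(\bar K_h\setminus\{0\})\subset\mathrm{Int}\bar K_h=K_h\setminus\{0\}$ into the analytic statement that, along any nontrivial solution $x(t)$ of \eqref{linearity-com-system} with $x(0)\in\bar K_h\setminus\{0\}$, one has $N_M(x(t))\le 2h-1$ for every $t>0$ (together with $x(t)\ne 0$, which is automatic from linearity and uniqueness). The whole argument then rests on three facts already available: Lemma \ref{zero-property}, Lemma \ref{zero-property2}, and the remark that $t\mapsto N(x(t))$, defined off a discrete set of instants, is nonincreasing and piecewise constant with only finitely many jumps — finitely many because each jump is a strict decrease (Lemma \ref{zero-property}(ii)) and $N$ takes values in the finite set $\{1,3,\dots,\tilde n\}$ (alternatively: by Lemma \ref{zero-property}(iii) all the "bad" instants lie in a compact $t$-interval and by (i) they are isolated).

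First I would record the pointwise content of membership in $\bar K_h$: if $x_0\in\bar K_h$ then $N_m(x_0)\le 2h-1$. This holds because the set $\{x:N_m(x)\le 2h-1\}$ is closed (lower semicontinuity of $N_m$, immediate from its definition as a minimum over shrinking neighborhoods) and contains $K_h\setminus\{0\}$ since $N_m\le N_M$; hence it contains $\overline{K_h\setminus\{0\}}=\bar K_h$. Next comes the key local step: for $x_0=x(0)\in\bar K_h\setminus\{0\}$ there is an $\eta>0$ such that $x(t)\in\mathcal N$ and $N(x(t))\le 2h-1$ for all $t\in(0,\eta)$. Indeed, by the finiteness of jumps we may choose $\eta>0$ so that $(0,\eta)$ contains no instant with $x(t)\notin\mathcal N$; then $N(x(t))$ is a constant $N^{+}$ on $(0,\eta)$. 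If $x_0\notin\mathcal N$, Lemma \ref{zero-property2} gives $N^{+}=N_m(x_0)\le 2h-1$; if $x_0\in\mathcal N$, then $N_m(x_0)=N(x_0)$ and Lemma \ref{zero-property}(i) gives $N^{+}=N(x(\varepsilon))\le N(x_0)=N_m(x_0)\le 2h-1$.

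With the local step established, I would propagate forward using the following observation: if $x(s)\in\mathcal N$ and $N(x(s))\le 2h-1$ for some $s>0$, then $N_M(x(t))\le 2h-1$ for all $t\ge s$. For $t=s$ this is just $N_M(x(s))=N(x(s))$. For $t>s$ split into two cases: if $x(t)\in\mathcal N$, then $N_M(x(t))=N(x(t))\le N(x(s))\le 2h-1$ by monotonicity of $N$ through $\mathcal N$ (Lemma \ref{zero-property}(i)); if $x(t)\notin\mathcal N$, then by Lemma \ref{zero-property2} $N_M(x(t))=N(x(t-\varepsilon))$ for all small $\varepsilon>0$, and choosing $\varepsilon$ small enough that $x(t-\varepsilon)\in\mathcal N$ and $t-\varepsilon>s$ gives again $N(x(t-\varepsilon))\le N(x(s))\le 2h-1$. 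Finally, for arbitrary $t>0$: if $t<\eta$ the local step already yields $N_M(x(t))=N(x(t))\le 2h-1$; if $t\ge\eta$, pick any $s\in(0,\eta)$ and apply the propagation step. In every case $x(t)\in K_h\setminus\{0\}=\mathrm{Int}\bar K_h$, which is the assertion. (The degenerate ranges $h=0$, where the claim is vacuous, and $2h-1=\tilde n$, where $\bar K_h=\mathbb{R}^n$ and the claim is trivial, are covered uniformly by this argument.)

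I expect the only genuinely delicate point to be the local step near $t=0$: one must ensure that the solution does not linger on the bad set $\mathbb{R}^n\setminus\mathcal N$ over a set of times accumulating at $0$, and that the value of $N$ picked up immediately after $t=0$ is governed by $N_m(x_0)$, not by the generally larger $N_M(x_0)$. Both are handled precisely by Lemma \ref{zero-property}(ii)--(iii) and Lemma \ref{zero-property2}; the remainder is bookkeeping of the two alternatives $x_0\in\mathcal N$ / $x_0\notin\mathcal N$ and the dichotomy $x(t)\in\mathcal N$ / $x(t)\notin\mathcal N$. Note that no quantitative control of $A(t)$ beyond the sign condition \eqref{positive-a-a} — already exploited inside Lemma \ref{zero-property2} — is needed here.
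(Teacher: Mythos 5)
Your proof is correct, and while it rests on exactly the same two ingredients as the paper's argument --- the monotonicity of $N$ along solutions (Lemma \ref{zero-property}) and the identification of $N_m$, $N_M$ with the one-sided limits of $N$ at a singular instant (Lemma \ref{zero-property2}) --- it is organized quite differently. The paper argues by contradiction and by perturbing the initial condition: assuming $N_M(\Phi(t_0)x_0)>2h-1$, it pulls back to a nearby regular instant $t_0-\epsilon_0$ where $N(\Phi(t_0-\epsilon_0)x_0)=N_M(\Phi(t_0)x_0)>2h-1$, approximates $x_0$ by a sequence $x_n\in\mathcal{N}\cap(K_h\setminus\{0\})$ (possible since $\mathcal{N}$ is dense and $K_h\setminus\{0\}$ is open with closure $\bar{K}_h$), and then uses continuity of $N$ on $\mathcal{N}$ together with continuous dependence on initial data to contradict $N(\Phi(t_0-\epsilon_0)x_n)\le N(x_n)\le 2h-1$. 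You instead work directly along the single trajectory through $x_0$: the preliminary observation that $\bar{K}_h\subset\{N_m\le 2h-1\}$, combined with Lemma \ref{zero-property2} applied at $t=0$, shows the solution immediately enters the set $\{N\le 2h-1\}$, and the forward propagation to all $t>0$ is then routine bookkeeping. Your route avoids continuous dependence and the density of $\mathcal{N}$ altogether, at the cost of two extra (but correctly justified) facts: that $\{N_m\le 2h-1\}$ is closed, and that the singular instants of a nontrivial solution are finite in number. Both arguments are of comparable length, and either would serve the paper; your direct version has the minor advantage of making explicit exactly where the inequality $N_m(x_0)\le 2h-1$ (rather than the possibly false $N_M(x_0)\le 2h-1$) is what membership in $\bar{K}_h$ actually provides.
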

\begin{proof}
Suppose that there exist $x_0\in \bar{K}_h\setminus\{0\}$ and $t_0>0$, such that $\Phi(t_0)x_0\notin  K_h\setminus \{0\}$. Then $N_M(\Phi(t_0)x_0)>2h-1$. Since $\mathcal{N}$ is open and dense, one can find a sequence $x_n\in \mathcal{N} \cap (K_h\setminus \{0\})$ (which entails that $N(x_n)\leq 2h-1$) such that $x_n\rightarrow x_0$ as $n\rightarrow \infty$. On the other hand, by Lemma \ref{zero-property2}, one can choose $\epsilon_0>0$ small enough, such that $t_0-\epsilon_0>0$ and $N(\Phi(t_0-\epsilon_0)x_0)=N_M(\Phi(t_0)x_0)>2h-1$. Since $\mathcal{N}$ is an open set and $N(\cdot)$ is continuous on $\mathcal{N}$, one has $\Phi(t_0-\epsilon_0)x_n\in \mathcal{N}$, and hence, $N(\Phi(t_0-\epsilon_0)x_n)=N(\Phi(t_0-\epsilon_0)x_0)>2h-1$ for $n$ sufficiently large, which contradicts the fact that $N(\Phi(t_0-\epsilon_0)x_n)\leq N(x_n)\leq 2h-1$. We have completed the proof.
\end{proof}
Based on Proposition \ref{map-int}, we give the following corollary which is useful in the forthcoming section.

\begin{Cor}\label{solution-space}
Let $A(t)$ be the coefficient matrix of \eqref{linearity-com-system}. Then:
\begin{itemize}
\item [\emph{(i)}]If $\Sigma_{0}\subset K_h$ is a linear subspace and $\Sigma_{t}$ is the image of $\Sigma_{0}$ at time $t$ under \eqref{linearity-com-system},
    then ${\rm dim}\Sigma_t={\rm dim}\Sigma_0$ and $\Sigma_t\subset K_h$ for all $t\geq 0$.
\item [\emph{(ii)}]If $\Sigma^{0}\subset K^h$ is a linear subspace and $\Sigma^{t}$ is the image of $\Sigma^{0}$ under \eqref{linearity-com-system}, then ${\rm dim}\Sigma^t={\rm dim}\Sigma^0$ and $\Sigma^t\subset K^h$ for all $t\leq 0$.
\end{itemize}
\end{Cor}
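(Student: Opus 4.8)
The plan is to derive both statements from Proposition \ref{map-int}, after separating off the (trivial) dimension assertion. Since $\Phi(t)$ is a fundamental matrix of \eqref{linearity-com-system}, it is a linear isomorphism of $\mathbb{R}^n$ for every $t\in\mathbb{R}$; hence $\Sigma_t=\Phi(t)\Sigma_0$ and $\Sigma^t=\Phi(t)\Sigma^0$ are again linear subspaces, with $\dim\Sigma_t=\dim\Sigma_0$ and $\dim\Sigma^t=\dim\Sigma^0$ for all $t$. Thus the only real content is the invariance of the cones $K_h$ and $K^h$ under the flow in the stated time directions.

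Part (i) is then an immediate reformulation of Proposition \ref{map-int} for subspaces: if $\Sigma_0\subset K_h$ is a subspace, then $\Sigma_0\setminus\{0\}\subset K_h\setminus\{0\}\subset\bar{K}_h\setminus\{0\}$, so for $t>0$ Proposition \ref{map-int} yields $\Phi(t)(\Sigma_0\setminus\{0\})\subset{\rm Int}\bar{K}_h=K_h\setminus\{0\}$; adjoining the fixed point $0$ gives $\Sigma_t\subset K_h$, and $t=0$ is trivial.

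Part (ii) requires the backward-time counterpart of Proposition \ref{map-int}, namely that
\[
\Phi(t)\bigl(\bar{K}^h\setminus\{0\}\bigr)\subset K^h\setminus\{0\}\quad\text{for every }t<0;
\]
granting this, (ii) follows exactly as (i). I would prove the displayed inclusion by mirroring the proof of Proposition \ref{map-int}, replacing the pair $(N_M,\,t>0)$ by $(N_m,\,t<0)$. Assume for contradiction that $\Phi(t_0)x_0\notin K^h\setminus\{0\}$ for some $x_0\in\bar{K}^h\setminus\{0\}$ and some $t_0<0$, i.e.\ $N_m(\Phi(t_0)x_0)\le 2h-1$. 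Since $\mathcal{N}$ is open and dense, choose $x_k\to x_0$ with $x_k\in\mathcal{N}\cap(K^h\setminus\{0\})$, so $N(x_k)=N_m(x_k)>2h-1$. By Lemma \ref{zero-property2} (or, when $\Phi(t_0)x_0\in\mathcal{N}$, directly by continuity of $N$ on $\mathcal{N}$), pick $\epsilon_0>0$ small with $t_0+\epsilon_0<0$, $\Phi(t_0+\epsilon_0)x_0\in\mathcal{N}$, and $N(\Phi(t_0+\epsilon_0)x_0)=N_m(\Phi(t_0)x_0)\le 2h-1$. As $N$ is locally constant on the open set $\mathcal{N}$, it follows that $N(\Phi(t_0+\epsilon_0)x_k)=N(\Phi(t_0+\epsilon_0)x_0)\le 2h-1$ for $k$ large. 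But $t_0+\epsilon_0<0$ and $N$ is nonincreasing along the orbit of $x_k$ as time increases through $\mathcal{N}$ (Lemma \ref{zero-property}(i)), so $N(x_k)\le N(\Phi(t_0+\epsilon_0)x_k)\le 2h-1$, contradicting $N(x_k)>2h-1$. A shortcut that avoids this repetition is to observe that, away from the origin, $\bar{K}_h$ and $K^h$ are complementary in $\mathbb{R}^n$ (a consequence of $N_m\le N_M$, of $K_h\cap K^h=\{0\}$ and $\overline{K_h\cup K^h}=\mathbb{R}^n$, and of the openness of $K_h\setminus\{0\}$ and $K^h\setminus\{0\}$), so that applying $\Phi(-t)$ to the inclusion $\Phi(t)\bar{K}_h\subset K_h$ $(t>0)$ of Proposition \ref{map-int} and taking complements produces the displayed inclusion at once.

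The one genuine obstacle is the asymmetry between $K_h$, which is cut out by $N_M$, and $K^h$, which is cut out by $N_m$: Proposition \ref{map-int} is stated only for $K_h$ in forward time, so (ii) is not literally the time-reversal of (i). Lemma \ref{zero-property2} is precisely the device that dissolves this asymmetry, since it identifies $N_m$ with the value of $N$ just after a degenerate instant and $N_M$ with the value just before one; this is what makes the mirrored argument run (and likewise underlies the complementation shortcut). Everything else — the dimension count and the passage from cone invariance to subspace invariance — is routine.
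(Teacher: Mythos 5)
Your proof is correct and follows the paper's route: the paper proves (i) exactly as you do (dimension from linear ODE theory, cone invariance from Proposition \ref{map-int}) and dismisses (ii) with ``the other case is similar,'' so your mirrored argument using Lemma \ref{zero-property2} and the monotonicity of $N$ is precisely the detail the paper omits, and it is sound. One caveat on your optional ``complementation shortcut'': since \eqref{linearity-com-system} is non-autonomous, $\Phi(-t)\neq\Phi(t)^{-1}$ in general, so the inverse of the forward map is the transition operator $\Phi(0,t)$ from time $t<0$ to time $0$ rather than $\Phi(-t)$; this is repairable (the proof of Proposition \ref{map-int} applies to any forward transition $\Phi(t,s)$, $t>s$), but your direct mirrored argument avoids the issue entirely and is the safer route.
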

\begin{proof}
We only prove (i), the other case is similar. It is easy to see that ${\rm dim}\Sigma_t=\text{dim } \Sigma_0$ for all $t\in \mathbb{R}$, by the standard  solution theory of homogeneous linear differential equations. For any nonzero vector $x_0\in \Sigma_0$, by Proposition \ref{map-int}, $\Phi(t)x_0\in{\rm Int}\bar{K}_h=K_h\setminus\{0\}$ for all $t>0$, where $\Phi(t)$ is the solution operator of \eqref{linearity-com-system}. So $\Sigma_t\subset K_h$ for all $t\geq 0$.
\end{proof}

We now introduce the concept of a cone of rank $k$ (see \cite{Kra,fusco1991,San1}):
\begin{definition}
{\rm
Let $k\ge 1$ be an integer. A closed subset $K\subset \mathbb{R}^n$ is called {\it a cone of rank $k$},
if for any $x\in K$ and $\lambda \in \mathbb{R}$, one has $\lambda x\in K$. Moreover,
$\max\{\mathrm{dim} W|W\text{ is a subspace of }\mathbb{R}^n\text{ and } W\subset K \}=k$.}
\end{definition}
\begin{rem}
{\rm It is easy to see that a usual convex cone $C$ (in the sense of Hirsch \cite{HiSm})
defines the cone $K=C\cup (-C)$ which is of rank $1$.}
\end{rem}

\begin{proposition}\label{dim-cone}
For each $h=1,\cdots,\frac{\tilde{n}-1}{2},$ $\bar{K}_h$ is a cone of rank $2h$. More precisely, let $V$ be a subspace of $\mathbb{R}^n$. Then
\begin{equation*}
  d_h=\mathrm{max} \{\mathrm{dim} V|V\subset\bar{K}_h\}=2h.
\end{equation*}
\end{proposition}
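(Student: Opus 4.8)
The plan is to establish the two inequalities $d_h \ge 2h$ and $d_h \le 2h$ separately, by exhibiting an explicit $2h$-dimensional subspace inside $\bar K_h$ and then ruling out any $(2h+1)$-dimensional one. For the lower bound, I would look for a linear subspace $V$ on which the Lyapunov functional $N$ is bounded above by $2h-1$. The natural candidates come from the structure of $N$ itself: since $N$ counts sign changes (in the $\delta$-twisted sense) around the cyclic arrangement $x_1,\dots,x_n$, and always takes odd values in $\{1,3,\dots,\tilde n\}$, a subspace on which every nonzero vector has at most $2h-1$ such sign changes will lie in $K_h$ once one checks the same bound passes to $N_M$, i.e. to all nearby points in $\Lambda$. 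A clean way to produce such a $V$: take the first $2h$ coordinate directions, or more robustly a $2h$-dimensional space of vectors $(x_1,\dots,x_n)$ supported (in a suitable sense) so that the circular sign-pattern can switch at most $2h-1$ times; one then verifies directly from the combinatorial definition of $N$ and of $N_M$ (as the max of $N$ over a small neighborhood intersected with $\Lambda$) that $N_M \le 2h-1$ on $V\setminus\{0\}$, so $V\subset \bar K_h$ and hence $d_h \ge 2h$. I would phrase this using the fact that $N_M$ is upper semicontinuous and locally constant on $\mathcal N$, so it suffices to bound $N$ on $V\cap\Lambda$.

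For the upper bound $d_h \le 2h$, suppose for contradiction that some subspace $V\subset \bar K_h$ has $\dim V = 2h+1$. The idea is to run $V$ forward under the flow and use Corollary \ref{solution-space}(i): $\Sigma_t := \Phi(t)V$ stays inside $K_h$ for all $t\ge 0$, with $\dim \Sigma_t = 2h+1$. By Lemma \ref{zero-property}(iii), for $t$ large every solution starting in $V$ lands in $\mathcal N$ with $N$ constant along it; moreover $N(\Phi(t)x) \le N_M(\Phi(t)x) \le 2h-1$ for $x\in V\setminus\{0\}$ and $t\ge 0$ (since $\Phi(t)x\in K_h\setminus\{0\}$), and because $N$ takes only odd values, eventually $N(\Phi(t)x)$ is one of $1,3,\dots,2h-1$. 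So for $t$ sufficiently large, $\Sigma_t\setminus\{0\}$ is a $(2h+1)$-dimensional punctured subspace on which the continuous, locally constant, $\{1,3,\dots,2h-1\}$-valued functional $N$ is defined everywhere. The contradiction should come from a dimension count: a subspace of $\mathbb R^n$ of dimension $2h+1$ all of whose nonzero vectors have at most $2h-1$ (twisted) sign changes cannot exist. This is essentially the linear-algebra heart of the matter — analogous to the classical fact that a $(k{+}1)$-dimensional space of functions must contain an element with at least $k$ sign changes (a Sturm/Descartes-type statement). I would prove it by the standard interpolation argument: given $2h+1$ independent vectors, one can form a nonzero linear combination vanishing at $2h$ prescribed coordinates among $x_1,\dots,x_n$ arranged cyclically, and then a perturbation/limiting argument (using $N_M$ and the fact proved via \eqref{sum-x-j-ex}-type expansions that forced zeros create the maximal number of sign changes nearby) shows the resulting vector has $N_M \ge 2h+1 > 2h-1$, contradicting membership in $\bar K_h$.

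The step I expect to be the main obstacle is precisely this last linear-algebra lemma — showing that a $(2h+1)$-dimensional subspace must contain a vector with $N_M$-value at least $2h+1$. One must handle the cyclic (mod $n$) structure carefully, the $\delta_1 = -1$ twist, and the passage from $N$ on $\Lambda$ to $N_M$ on all of $\mathbb R^n$: forcing $2h$ coordinates to be zero does not immediately give $2h+1$ genuine sign changes, but the sign-expansion analysis already carried out in the proof of Lemma \ref{zero-property2} (the formula \eqref{sum-x-j-ex}) tells us exactly how zero blocks resolve into sign changes for nearby vectors, and this is what upgrades the count to $N_M \ge 2h+1$. The forward-invariance from Corollary \ref{solution-space} and the monotonicity/eventual-constancy from Lemma \ref{zero-property} are then just the bookkeeping that lets us assume, without loss of generality, that $N$ is everywhere defined and constant on the relevant subspace, reducing the whole proposition to that combinatorial fact about sign changes in subspaces of $\mathbb R^n$.
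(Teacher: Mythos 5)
There is a genuine gap, and it is concentrated in the lower bound $d_h\ge 2h$, with a secondary imprecision in the upper bound. For the lower bound, the candidate you name first (the span of the first $2h$ coordinate directions) does not satisfy the criterion you propose to check: for $n=5$, $h=1$, the vector $(1,-1,0,0,0)\in\mathrm{span}\{e_1,e_2\}$ has $N_M=3$ (perturb to $(1,-1,\eps,-\eps,\eps)$), so the verification ``$N_M\le 2h-1$ on $V\setminus\{0\}$'' fails. (Such coordinate subspaces do in fact lie in $\bar{K}_h$, because $N_m\le 2h-1$ there --- the zeros can always be filled in compatibly, and $N$ is odd --- but that is a different check from the one you describe, and you carry out neither.) Your fallback, ``a $2h$-dimensional space whose sign patterns switch at most $2h-1$ times,'' restates the goal. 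The paper's lower bound is the substantive content of Lemma \ref{space-split}: one diagonalizes the constant cyclic matrix with $-1$ in the corner, takes $E_k=\mathrm{span}\{\mathrm{Re}\,\eta_k,\mathrm{Im}\,\eta_k\}$ for the $n$-th roots of $-1$ ordered by real part, and uses the flow of $\dot x=Ax$ (quasi-periodicity on $E_k$ forces $N$ to be globally constant there and equal to $2k-1$; the ordering of the real parts then pins down $N_m$ and $N_M$ on $E_1\oplus\cdots\oplus E_h$). Some such construction, or the $N_m$-based verification for coordinate subspaces, is indispensable.

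For the upper bound your skeleton (interpolation from $2h+1$ independent vectors) is the paper's, but the stated contradiction does not close: producing $y\in V$ with $N_M(y)\ge 2h+1$ does \emph{not} contradict $y\in\bar{K}_h$, since $y\in\bar{K}_h$ is equivalent to $N_m(y)\le 2h-1$, which coexists with $N_M(y)\ge 2h+1$ (the same example $(1,-1,0,0,0)\in\bar{K}_1$). The paper avoids this by prescribing $2h+1$ coordinates equal to $\pm1$ \emph{alternately}: nonzero alternating values persist under every small perturbation, forcing $N_m(y)\ge 2h+1$ and contradicting $N_m(y)\le 2h-1$. Your forward-flow preamble could instead be made to do real work --- by Proposition \ref{map-int} (not Corollary \ref{solution-space}(i), which assumes $\Sigma_0\subset K_h$), $\Phi(t)V\subset K_h$ for $t>0$, and on $K_h$ the bound $N_M\le 2h-1$ does hold, so a vector there with $2h$ zeros (each maximal block of $\ell$ zeros contributes at least $\ell$ to $N_M$ under a static perturbation) gives a contradiction --- but note that \eqref{sum-x-j-ex} concerns how zeros resolve under the time evolution, not under the arbitrary perturbations defining $N_M$, and the appeal to Lemma \ref{zero-property}(iii) is neither needed nor uniform over a whole subspace.
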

Before proving this proposition, we need a technical lemma as follows.
\begin{lemma}\label{space-split}
Let $A$ be a $n\times n$ matrix of the following form
\begin{equation*}
A=    \left(
      \begin{array}{ccccc}
        0 & 0 &  &  -1 \\
        1 & 0 & 0 &    \\
        & \ddots & \ddots & 0\\
        0 &  &   1 & 0 \\
      \end{array}
    \right).
\end{equation*}
Then one has:
\begin{itemize}
 \item [\emph{(i)}]If $n$ is even, there exist $\frac{\tilde{n}+1}{2}$ invariant subspaces $E_k$ of $A$, with $\mathrm{dim}E_k=2$ for $k=1,\cdots,\frac{\tilde{n}+1}{2}$. Moreover, for any nonzero vector $\xi\in E_k$, one has $\xi\in \mathcal{N}$ and $N(\xi)=2k-1$.
 \item [\emph{(ii)}]If $n$ is odd, there exist $\frac{\tilde{n}+1}{2}$ invariant subspaces $E_k$ of $A$, with $\mathrm{dim}E_k=2$, for $k=1,\cdots,\frac{\tilde{n}-1}{2}$, and $\mathrm{dim}E_{\frac{\tilde{n}+1}{2}}=1$. Moreover, for any nonzero vector $\xi\in E_k$, one has $\xi\in \mathcal{N}$ and $N(\xi)=2k-1$.
 \item [\emph{(iii)}]Let $W_{i,j}=E_i\oplus \cdots\oplus E_j$, for $1\leq i\leq j\leq \frac{\tilde{n}+1}{2}$. Then for any nonzero vector $\xi\in W_{i,j}$, one has
\begin{equation*}
  2i-1\leq N_m(\xi)\leq N_M(\xi)\leq 2j-1.
\end{equation*}
\end{itemize}
\end{lemma}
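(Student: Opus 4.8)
The plan is to make the eigenstructure of $A$ completely explicit, read off the functional $N$ on each $E_k$ by a direct sign-change count, and then transfer this to the sums $W_{i,j}$ by using that $N$ is a Lyapunov functional along the linear flow $\dot x=Ax$ together with the fact that the modes $E_k$ have pairwise distinct exponential rates.

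\textbf{Step 1: eigenstructure of $A$.} Expanding $\det(\lambda I-A)$ along the first row gives $\lambda^{n}+1$, so the eigenvalues of $A$ are the $n$-th roots of $-1$, namely $e^{\pm i\theta_k}$ with $\theta_k=(2k-1)\pi/n$ for $1\le k\le\lfloor n/2\rfloor$, together with $-1$ (i.e.\ $\theta_{(n+1)/2}=\pi$) when $n$ is odd. Solving $Av=\lambda v$ gives the eigenvector $v=(1,\lambda^{-1},\dots,\lambda^{-(n-1)})$, so the real invariant subspace $E_k$ attached to the pair $e^{\pm i\theta_k}$ is exactly the set of $\xi$ with $\xi_\ell=r\cos\!\big(\phi-(\ell-1)\theta_k\big)$, $\ell=1,\dots,n$, $r\ge 0$, $\phi\in\mathbb{R}$; and for $n$ odd, $E_{(n+1)/2}$ is the line through $(1,-1,1,\dots,1)$. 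This already yields the number and dimensions of the $E_k$.

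\textbf{Step 2: the value of $N$ on $E_k$.} Set $y_\ell=r\cos(\phi-\ell\theta_k)$ for $\ell\in\mathbb{Z}$; then $y_\ell=\xi_{\ell+1}$ for $0\le\ell\le n-1$, and since $n\theta_k=(2k-1)\pi$ we have $y_{\ell+n}=-y_\ell$, in particular $y_n=-\xi_1$. Using $\delta_1=-1$, $\delta_i=1$ ($i\ge 2$), a short computation shows that for $\xi\in\Lambda$, $N(\xi)$ is the number of sign changes in the word $y_0,y_1,\dots,y_n$ (the twisted term at $i=1$ records the sign change between $y_{n-1}$ and $y_n=-y_0$). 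As $\ell$ runs from $0$ to $n$ the argument $\phi-\ell\theta_k$ decreases by exactly $(2k-1)\pi$ and, whenever $\theta_k<\pi$, by less than $\pi$ between consecutive samples; when $\xi\in\Lambda$ no $y_\ell$ vanishes, so the open argument interval (length $(2k-1)\pi$, endpoints not zeros of $\cos$) contains precisely $2k-1$ zeros of $\cos$, each forcing exactly one sign change and all other gaps forcing none, whence $N(\xi)=2k-1$ and $\xi\in\mathcal{N}$ (as $N$ is continuous on $\Lambda$). For $\xi\in E_k\setminus(\Lambda\cup\{0\})$ with $\theta_k<\pi$, the zero entries of the word $y_0,\dots,y_n$ are never adjacent (adjacency would force $\theta_k\equiv 0\pmod\pi$ or $n\mid(2k-1)$; note that $y_0,y_n$ vanish together but sit at the two ends, not adjacent), and if $y_{\ell_0}=0$ then its neighbours $y_{\ell_0\mp 1}=\mp r\sin\theta_k$ have opposite signs (the twist $\delta_1=-1$ being exactly what makes this hold through the ends). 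Hence for all small $\eta$ with $\xi+\eta\in\Lambda$ the word of $\xi+\eta$ differs from that of $\xi$ only at the former zero entries, each flanked by a pair of gaps contributing exactly one sign change no matter its perturbed sign; counting as above gives $N(\xi+\eta)=2k-1$, so $N_m(\xi)=N_M(\xi)=2k-1$, i.e.\ $\xi\in\mathcal{N}$ and $N(\xi)=2k-1$. Finally, for $n$ odd the alternating line $(1,-1,\dots,1)$ has sign word $+,-,\dots,+$, so $N=n=2k-1$. This proves (i) and (ii). I expect the bookkeeping at the wrap-around index --- keeping track of the $\delta_1=-1$ twist when a vanishing entry sits at position $1$ or $n$ --- to be the most delicate point.

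\textbf{Step 3: the range on $W_{i,j}$.} Fix $0\ne\xi\in W_{i,j}$, write $\xi=\sum_{k=i}^{j}\xi_k$ with $\xi_k\in E_k$, and put $k_*=\min\{k:\xi_k\ne 0\}$, $k^*=\max\{k:\xi_k\ne 0\}$, so $i\le k_*\le k^*\le j$. The real parts $\cos\theta_i,\dots,\cos\theta_j$ of the eigenvalues are pairwise distinct, hence $\|e^{tA}\xi_k\|$ grows as $t\to+\infty$ and decays as $t\to-\infty$ at the rate $\cos\theta_k$; therefore $e^{tA}\xi/\|e^{tA}\xi\|$ approaches $E_{k_*}\cap S^{n-1}$ as $t\to+\infty$ and $E_{k^*}\cap S^{n-1}$ as $t\to-\infty$. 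Since $N$ is continuous on the open set $\mathcal{N}$ and, by Step 2, equals $2k-1$ on $E_k\setminus\{0\}$, it is constant (value $2k_*-1$, resp.\ $2k^*-1$) on a neighbourhood of $E_{k_*}\cap S^{n-1}$, resp.\ $E_{k^*}\cap S^{n-1}$; hence $N(e^{tA}\xi)=2k_*-1$ for $t\gg 0$ and $N(e^{tA}\xi)=2k^*-1$ for $t\ll 0$. By Lemma \ref{zero-property}, $t\mapsto N(e^{tA}\xi)$ is nonincreasing; by Lemma \ref{zero-property2}, $N_m(\xi)=N(e^{tA}\xi)$ for small $t>0$ and $N_M(\xi)=N(e^{tA}\xi)$ for small $t<0$ (both equal to $N(\xi)$ when $\xi\in\mathcal{N}$). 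Combining, $2i-1\le 2k_*-1\le N_m(\xi)\le N_M(\xi)\le 2k^*-1\le 2j-1$, which is (iii).
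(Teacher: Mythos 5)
Your proof is correct, and its skeleton (explicit diagonalization of $A$ in Steps 1, growth-rate separation plus monotonicity of $N$ along the flow in Step 3) coincides with the paper's; the genuine difference is in how you establish $N\equiv 2k-1$ on $E_k\setminus\{0\}$. The paper verifies the value only on the two basis vectors ${\rm Re}\,\eta_k$, ${\rm Im}\,\eta_k$ (asserted as ``clearly''), then uses the dynamics --- the forward and backward eventual constancy of $N(x(t))$ from Lemma \ref{zero-property}(iii) together with the periodicity of the angular part $c_kq_k(t)+\tilde c_k\tilde q_k(t)$ forces the two asymptotic values to agree, whence by Lemma \ref{zero-property}(ii) the whole trajectory lies in $\mathcal{N}$ --- and finally propagates the value $2k-1$ by connectivity of $E_k\setminus\{0\}$ and continuity of $N$ on $\mathcal{N}$. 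You instead compute $N$ on all of $E_k$ by a direct trigonometric sign-change count: identifying $N(\xi)$ with the number of sign changes of the word $y_0,\dots,y_n$ sampled from $\cos$ at steps $\theta_k<\pi$ over an interval of length $(2k-1)\pi$, and handling the degenerate points $\xi\notin\Lambda$ by checking that each vanishing entry is flanked by neighbours of opposite sign (with the $\delta_1=-1$ twist doing exactly this at the wrap-around), so every small perturbation yields the same count. Your route is more computational but self-contained and makes the value $2k-1$ transparent (it is literally the number of zeros of $\cos$ in an interval of length $(2k-1)\pi$), and as a bonus it proves $N_m=N_M=2k-1$ directly at the non-generic points rather than inferring membership in $\mathcal{N}$ from the flow; the paper's route avoids the wrap-around bookkeeping at the cost of leaving the base computation on ${\rm Re}\,\eta_k,{\rm Im}\,\eta_k$ implicit. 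Your Step 3 is the paper's argument in slightly different clothing: where you say the normalized solution approaches $E_{k_*}\cap S^{n-1}$ (resp.\ $E_{k^*}\cap S^{n-1}$), the paper extracts sequences $t_m\to\mp\infty$ along which $e^{-\mu t_m}x(t_m)$ converges to the extreme mode; both then conclude via the monotonicity of $N$ and the definition of $N_m,N_M$.
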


\begin{proof}
We only prove (i), because the proof of (ii) is similar. Since the characteristic polynomial of this matrix $A$ is $\lambda^n+1$, the eigenvalues of this matrix are $\lambda_k=\cos\frac{2(k-1)\pi+\pi}{n}+i\sin\frac{2(k-1)\pi+\pi}{n}$, $k=1,\cdots,n$ and the corresponding eigenvectors are $\eta_k=(\lambda^{n-1}_k,\lambda^{n-2}_k,\cdots,1)^T$, $k=1,\cdots,n$.
Because $n$ is even, all the roots are conjugate complex roots.

Let $E_k={\rm span}\{{\rm Re}\eta_k,{\rm Im}\eta_k\}$ for $k=1,2,\cdots,\frac{\tilde{n}+1}{2}$, then these spaces are invariant under $A$. Moreover, dim$E_k=2$ for $k=1,2,\cdots,\frac{\tilde{n}+1}{2}$. Clearly, ${\rm Re}\eta_k$ and ${\rm Im}\eta_k$ belong to $\mathcal{N}$ and $N({\rm Re}\eta_k)=N({\rm Im}\eta_k)=2k-1$ for $k=1,2,\cdots,\frac{\tilde{n}+1}{2}$. Given any $\xi\in E_k\setminus\{0\}$, the solution $x(t)$ of $\dot{x}=Ax$ with initial value $x(0)=\xi$ can be expressed as
\begin{equation*}\label{repre-solution}
  x(t)=(c_kq_k(t)+\tilde{c}_k\tilde{q}_k(t))e^{\mu_k t},
\end{equation*}
where $\mu_k={\rm Re} \lambda_k$, $q_k(t)$ and $\tilde{q}_k(t)$ are periodic functions with $q_k(0)={\rm Re}\eta_k$ and $\tilde{q}_k(0)={\rm Im}\eta_k$. By Lemma \ref{zero-property}(iii), there exists $T_0>0$ and $l,s\in \mathbb{N}$ such that $N(x(t))=l$ for $t>T_0$ and $N(x(t))=s$ for $t<-T_0$. Since $(c_kq_k(t)+\tilde{c}_k\tilde{q}_k(t))$ is also periodic function,  we have $s=l$. Consequently, Lemma \ref{zero-property}(ii) implies that $x(t)\in\mathcal{N}$ and $N(x(t))=l$ for all $t\in \mathbb{R}$. In particular, $\xi\in \mathcal{N}$. By the arbitrariness of $\xi$, we have $E_k\setminus\{0\}\subset \mathcal{N}$. Recall that $N({\rm Re}\eta_k)=N({\rm Im}\eta_k)=2k-1$. Combining with the connectivity of $E_k\setminus\{0\}$, the continuity of $N$ on $\mathcal{N}$ then implies that $N(\xi)=2k-1$ for all $\xi\in E_k\setminus\{0\}$.

For (iii), we also consider the case that $n$ is even, the other case is similar. Choose a nonzero vector $\xi\in W_{i,j}$, then $\xi={\Sigma}_{k=i}^j(c_k{\rm Re}\eta_k+\tilde{c}_k{\rm Im}\eta_k)$. Without loss of generality, we assume that $c_k\neq 0$ and $\tilde{c}_k\neq 0$, for $k=i,\cdots,j$. Similar as in (i), the solution $x(t)$ of $\dot{x}=Ax$ with initial value $x(0)=\xi$, can be represented in the following form
\begin{equation*}
  x(t)={\Sigma}_{k=i}^j(c_kq_k(t)+\tilde{c}_k\tilde{q}_k(t))e^{\mu_k t},
\end{equation*}
where $\mu_k={\rm Re}\lambda_k$, $q_k(t)$ and $\tilde{q}_k(t)$ are periodic functions with $q_k(0)={\rm Re}\eta_k$ and $\tilde{q}_k(0)={\rm Im}\eta_k$, for $k=i,\cdots,j$. Moreover, we note that $\mu_i>\cdots>\mu_j$.\par
 From Lemma \ref{zero-property}(iii), there exist $T_0>0$ and $l,s\in \mathbb{N}$ such that $N(x(t))=l$ for all $t\geq T_0$ and $N(x(t))=h$ for all $t\leq-T_0$. Since $q_k(t)$ and $\tilde{q}_k(t)$  are periodic for $k=i,\cdots,j$, there exist two sequences $t_m\rightarrow -\infty$ and $\tilde{t}_m\rightarrow \infty$ as $m\rightarrow \infty$ such that $e^{-\mu_jt_m}x(t_m)\rightarrow (c_j{\rm Re}\eta_j+\tilde{c}_j{\rm Im}\eta_j)$ and $e^{-\mu_i\tilde{t}_m}x(\tilde{t}_m)\rightarrow (c_i{\rm Re}\eta_i+\tilde{c}_i{\rm Im}\eta_i)$ as $m\rightarrow\infty$. By virtue of (i) of this lemma, it entails that $h=2j-1$ and $l=2i-1$. So, $2i-1=N(x(T_0))\leq N_m(x(0))\leq N_M(x(0))\leq N(x(-T_0))= 2j-1$. We have completed the proof.
\end{proof}

\begin{proof}[Proof of Proposition \ref{dim-cone}]
It is easy to see that $d_h\geq 2h$ from Lemma \ref{space-split} (iii), by choosing $i=1,j=h$. Suppose that $d_h>2h$, then there exists a subspace $V_1\subset \bar{K}_h$ with dim $V_1>2h$. Thus, one can choose at least $(2h+1)$ linearly independent column-vectors $\xi_1,\cdots,\xi_{2h+1}\in V_1$. For $y=\Sigma_{i=1}^{2h+1}\gamma_i\xi_i$, since $B=(\xi_1,\cdots,\xi_{2h+1})$ is an $n\times {(2h+1)}$ matrix with ${\rm Rank}(B)=2h+1$, by choosing $\gamma_i$  suitably, we may obtain some $y$ whose $2h+1$ components are equal to $1$ or $-1$, alternatively. This then implies $N_m(y)\geq 2h+1$. On the other hand, since $y\in \bar{K}_k$ and the open and dense of $\mathcal{N}$, there exists a sequence $x_n\in \bar{K}_h\cap \mathcal{N}$ such that, $N(x_n)\leq 2h-1$ and $x_n\rightarrow y$ as $n\rightarrow \infty$, which means $N_m(y)\leq 2h-1$, a contradiction. Thus, we have proved that $d_h=2h$.
\end{proof}

\begin{rem}\label{K-cone}
{\rm
By virtue of Proposition \ref{dim-cone}, we obtain that $\bar{K}_h$ (resp. $\bar{K}^h$), for $h=1,\cdots, \frac{\tilde{n}-1}{2}$, are cones with rank $\bar{K}_h=2h$ (resp. rank $\bar{K}^h=n-2h$).
}
\end{rem}

In order to generalize the Floquet Theory in \cite{Mallet-Paret1990} for time-periodic cases to general time-dependent cases, we need the following generalized Perron-Frobenius Theorem (See e.g., \cite[Theorem 1]{fusco1991}).
\begin{lemma}\label{perron-thm}
Let $K\subset \mathbb{R}^n$ be a cone of rank $d$. Assume that $L$ is a linear operator on $\mathbb{R}^n$ satisfying $L(K\setminus\{0\})\subset {\rm Int} K$.
Then there exist (unique) subspaces $V_1$, $V_2$ such that
\begin{itemize}
\item [\emph{(i)}] $V_1\cap V_2=\{0\}$, $\mathrm{dim}$ $V_1=d$, $\mathrm{dim}$ $V_2=n-d$,
\item [\emph{(ii)}] $LV_j\subset V_j$, $j=1,2$,
\item [\emph{(iii)}] $V_1\subset \{0\}\cup {\rm Int} K$, $V_2\cap K=\{0\}$.
\end{itemize}
Moreover, if $\sigma_1(L)$ and $\sigma_2(L)$ are the spectra of $L$ restricted to $V_1$ and $V_2$, then between $\sigma_1(L)$ and $\sigma_2(L)$ there is a gap:
$$\lambda\in \sigma_1(L),\ \mu\in\sigma_2(L)\Rightarrow |\lambda|>|\mu|.$$
\end{lemma}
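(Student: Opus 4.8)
The plan is to reduce the statement to the classical Perron--Frobenius (Krein--Rutman) theorem for a rank-one cone, by passing to the $d$-th exterior power. Let $\Lambda^d L$ denote the operator induced by $L$ on $\Lambda^d\mathbb{R}^n$ (the $d$-th compound of $L$). I would use two standard facts: (a) the eigenvalues of $\Lambda^d L$, listed with multiplicity, are exactly the products $\lambda_{i_1}\cdots\lambda_{i_d}$ over $i_1<\cdots<i_d$ built from the eigenvalues $\lambda_1,\dots,\lambda_n$ of $L$, which I order so that $|\lambda_1|\ge\cdots\ge|\lambda_n|$; and (b) a $d$-dimensional subspace $W$ is recorded by its decomposable Pl\"ucker vector $p(W)=w_1\wedge\cdots\wedge w_d$, and $p(LW)$ is a nonzero multiple of $(\Lambda^d L)p(W)$. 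From the structure of a cone of rank $d$ one constructs in $\Lambda^d\mathbb{R}^n$ a pointed convex cone $\widehat{C}$ --- the closed convex hull of a consistently oriented family $\{p(W):W\subset K,\ \dim W=d\}$ --- whose decomposable elements are precisely the Pl\"ucker vectors of the $d$-dimensional subspaces contained in $K$, and which is mapped into its own (relative) interior by $\Lambda^d L$: indeed a $d$-subspace in $K$ goes to a $d$-subspace in $\mathrm{Int}\,K$, hence $p(W)$ goes to an interior ray, and this propagates to all of $\widehat{C}$ by convexity. Thus $\widehat{K}=\widehat{C}\cup(-\widehat{C})$ is a rank-one cone on which $\Lambda^d L$ is strongly positive.

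Granting this reduction, I would invoke the classical Perron--Frobenius/Krein--Rutman theorem for $\Lambda^d L$ on $\widehat{C}$. It provides: a leading eigenvalue $\Lambda_0=\rho(\Lambda^d L)>0$ that is simple and strictly dominates the rest of the spectrum of $\Lambda^d L$; an eigenvector $\omega_0$ in the interior of $\widehat{C}$; and a left eigenvector $\ell$ with $\ell>0$ on $\widehat{C}\setminus\{0\}$ and with $\ker\ell$ equal to the sum of the generalized eigenspaces of $\Lambda^d L$ for all eigenvalues $\ne\Lambda_0$. By fact (a), $\Lambda_0=|\lambda_1\cdots\lambda_d|$, and the simple-and-strictly-dominant property forces $\{1,\dots,d\}$ to be the unique index set realizing this maximal modulus, hence $|\lambda_d|>|\lambda_{d+1}|$. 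I then let $V_1$, $V_2$ be the sums of the generalized eigenspaces of $L$ for $\{\lambda_1,\dots,\lambda_d\}$ and for $\{\lambda_{d+1},\dots,\lambda_n\}$. This immediately gives (i) and (ii) together with the spectral gap, since $\lambda\in\sigma_1(L)$, $\mu\in\sigma_2(L)$ yield $|\lambda|\ge|\lambda_d|>|\lambda_{d+1}|\ge|\mu|$.

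It remains to match $V_1$ and $V_2$ with the cone $K$. Using $\Lambda^d\mathbb{R}^n=\bigoplus_{p+q=d}\Lambda^p V_1\wedge\Lambda^q V_2$, the line $\Lambda^d V_1$ is invariant under $\Lambda^d L$ with eigenvalue $\det(L|_{V_1})=\lambda_1\cdots\lambda_d$ of modulus $\Lambda_0$, while every other summand carries only eigenvalues of strictly smaller modulus; hence $\Lambda^d V_1$ is the $\Lambda_0$-eigenspace, so $\omega_0$ spans it and is therefore \emph{decomposable} with $p^{-1}(\omega_0)=V_1$. Since $\omega_0$ lies in the interior of $\widehat{C}$, this gives $V_1\subset K$, and then $V_1\setminus\{0\}=L(V_1\setminus\{0\})\subset L(K\setminus\{0\})\subset\mathrm{Int}\,K$, which is (iii) for $V_1$. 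For $V_2$: the same decomposition shows $\ker\ell=\bigoplus_{q\ge1}\Lambda^{d-q}V_1\wedge\Lambda^q V_2\supset V_2\wedge\Lambda^{d-1}\mathbb{R}^n$; so if $x\in(V_2\cap K)\setminus\{0\}$ existed, then $Lx\in V_2\cap\mathrm{Int}\,K$, and taking (via the structure of $K$) a $d$-dimensional subspace $W\subset K$ through $Lx$ would give $p(W)\in V_2\wedge\Lambda^{d-1}\mathbb{R}^n\subset\ker\ell$ and simultaneously $p(W)\in\pm\widehat{C}\setminus\{0\}$, contradicting $\ell>0$ on $\widehat{C}\setminus\{0\}$. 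Hence $V_2\cap K=\{0\}$. Uniqueness of $(V_1,V_2)$ follows from the spectral gap, as the corresponding spectral subspaces are intrinsic, exactly as in the rank-one case.

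The step I expect to be the main obstacle is the reduction in the first paragraph: turning rank-$d$ strong monotonicity into honest rank-one strong positivity of $\Lambda^d L$. The delicate points are to show that $\widehat{C}$ is a pointed convex cone with nonempty relative interior, that its decomposable rays correspond exactly to the maximal subspaces contained in $K$, and that the family of these subspaces can be coherently oriented so that the signs of $p(W)$ are globally consistent; each of these requires genuinely unwinding the definition of a cone of rank $d$ (this is the content of \cite[Theorem 1]{fusco1991}). The related structural fact used for $V_2\cap K=\{0\}$ --- that every point of $\mathrm{Int}\,K$ lies on some $d$-dimensional subspace contained in $K$ --- is of the same nature. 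Everything downstream of the reduction is the classical Krein--Rutman theorem together with elementary compound-matrix linear algebra. (An alternative to the exterior-power route is to put a Hilbert-type projective metric on $\{W\in\mathrm{Gr}(d,n):W\subset K\}$ and show $W\mapsto LW$ is a strict contraction into its interior; this still rests on the same structural properties of $K$.)
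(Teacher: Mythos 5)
First, note that the paper does not prove this lemma at all: it is quoted as the generalized Perron--Frobenius theorem of Fusco and Oliva, with the proof deferred entirely to \cite[Theorem 1]{fusco1991} (see also \cite{Kra}). So there is no in-paper argument to compare yours against; the only question is whether your sketch stands on its own.

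As written it does not, because the step you yourself flag as ``the main obstacle'' is exactly the content of the theorem and is never closed. Everything downstream of your reduction is correct and standard: the eigenvalue bookkeeping for $\Lambda^d L$, the decomposition $\Lambda^d\mathbb{R}^n=\bigoplus_{p+q=d}\Lambda^pV_1\wedge\Lambda^qV_2$, the identification of the Perron eigenvector with the line $\Lambda^dV_1$, and the deduction $|\lambda_d|>|\lambda_{d+1}|$. But the reduction itself rests on three unproved structural assertions about an abstract cone of rank $d$. (a) That the family of $d$-dimensional subspaces contained in $K$ can be coherently oriented so that $\widehat{C}$ is a \emph{pointed} convex cone: pointedness is what Krein--Rutman requires, and it amounts to exhibiting a linear functional positive on all the $p(W)$, i.e.\ essentially an $(n-d)$-dimensional subspace meeting $K$ only at $0$ --- which is part of what the lemma is supposed to deliver. (b) That $p(W)\in{\rm Int}\,\widehat{C}$ whenever $W\setminus\{0\}\subset{\rm Int}\,K$: ``by convexity'' does not give this, since a decomposable vector lies on a $(d(n-d)+1)$-dimensional subvariety of the $\binom{n}{d}$-dimensional space $\Lambda^d\mathbb{R}^n$, and there is no a priori reason it is interior to the convex hull of the other Pl\"{u}cker vectors (nor that $\widehat{C}$ is solid at all). (c) That every nonzero point of ${\rm Int}\,K$ lies on some $d$-dimensional subspace contained in $K$ (used for $V_2\cap K=\{0\}$), which does not follow from the paper's minimal definition of a cone of rank $d$. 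Since you explicitly attribute these steps to \cite[Theorem 1]{fusco1991}, the argument is circular as a proof of that theorem, though it is a fair account of why the compound-operator route is plausible. To make it self-contained you would need either to establish (a)--(c) directly, or to carry out the Grassmannian/Hilbert-metric contraction argument you mention in your closing parenthesis.
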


Now we are ready to present the following proposition which generalizes the Floquet Theory obtained in \cite{Mallet-Paret1990} for time-periodic cases.
\begin{proposition}\label{rootspace-arra}
Let $\Phi(t)$ be a fundamental matrix of \eqref{linearity-com-system} with $\Phi(0)=I$. Then for any fixed $t>0$, there exist subspaces $W_h$, $h=1,2,\cdots,\frac{\tilde{n}+1}{2}$, which are invariant with respect to $\Phi(t)$ and satisfy:
\begin{equation*}
\begin{aligned}
   {\rm dim} W_h=2,\quad h=1,\cdots,\frac{\tilde{n}-1}{2},\\
   {\rm dim} W_{\frac{\tilde{n}+1}{2}}=\left\{
   \begin{aligned}
   & 2 \quad \text{if}\ n=\tilde{n}+1\ \text{is even},  \\
   & 1 \quad \text{if}\ n=\tilde{n}\ \text{is odd},  \\
   \end{aligned}
  \right.
\end{aligned}
\end{equation*}
and
$$\mathbb{R}^n=W_1\oplus W_2\oplus\cdots \oplus W_{\frac{\tilde{n}+1}{2}}.$$
If $x\in W_i\setminus\{0\}$ then $x\in \mathcal{N}$ and $N(x)=2i-1$, for $i=1,\cdots,\frac{\tilde{n}+1}{2}$. If
$x\neq 0$ and $ x\in W_h\oplus W_{h+1}\cdots \oplus W_k$, then $N_m(x)\geq 2h-1$ and $N_M(x)\leq 2k-1$.
Moreover, if $\nu_i$ and $\mu_i$ are the minimum and the maximum module of characteristic values of the restriction of $\Phi(t)$ to $W_i$, then
\[
\mu_1\geq\nu_1>\mu_2\geq\nu_2>\cdots>\mu_{\frac{\tilde{n}+1}{2}}\geq \nu_{\frac{\tilde{n}+1}{2}}.
\]
\end{proposition}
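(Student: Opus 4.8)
The strategy is to feed the \emph{entire} nested family of high-rank cones $\bar{K}_1\subset\bar{K}_2\subset\cdots$ (and, dually, $\bar{K}^1\supset\bar{K}^2\supset\cdots$) into the generalized Perron--Frobenius Theorem, and then to read off each $W_h$ as the two-dimensional ``gap'' between two consecutive dominant subspaces. The first thing I would record is the analogue of Proposition \ref{map-int} for the cones $\bar{K}^h$: for every $t<0$,
\[
\Phi(t)\big(\bar{K}^h\setminus\{0\}\big)\subset{\rm Int}\,\bar{K}^h=K^h\setminus\{0\},
\]
equivalently $\Phi(t)^{-1}$ with $t>0$ carries $\bar{K}^h\setminus\{0\}$ into ${\rm Int}\,\bar{K}^h$. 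This follows by an argument symmetric to that of Proposition \ref{map-int}, now invoking Lemma \ref{zero-property2} at an \emph{earlier} time (so that $N$ just before a singular instant equals $N_M$ there) together with the monotonicity of $N$ from Lemma \ref{zero-property}. Fix $t>0$ once and for all and set $L=\Phi(t)$; being a fundamental matrix, $L$ is invertible, so $0\notin\sigma(L)$.

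Next I would apply Lemma \ref{perron-thm} twice for each $h=1,\dots,\tfrac{\tilde n-1}{2}$: to the cone $\bar{K}_h$ (of rank $2h$ by Proposition \ref{dim-cone}) with the operator $L$ (admissible by Proposition \ref{map-int}), giving $L$-invariant $V_1^{(h)},V_2^{(h)}$ with $\dim V_1^{(h)}=2h$, $V_1^{(h)}\subset\{0\}\cup{\rm Int}\,\bar{K}_h$, $V_2^{(h)}\cap\bar{K}_h=\{0\}$ and a strict modulus gap between the $L$-spectra on the two pieces; and to $\bar{K}^h$ (of rank $n-2h$ by Remark \ref{K-cone}) with the operator $L^{-1}$ (admissible by the previous step), giving $L$-invariant $U_1^{(h)}$ with $\dim U_1^{(h)}=n-2h$ and $U_1^{(h)}\subset\{0\}\cup{\rm Int}\,\bar{K}^h$. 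Since $\mathbb{R}^n=V_1^{(h)}\oplus V_2^{(h)}$ is a splitting into $L$-invariant subspaces with disjoint spectra, each summand is a sum of generalized eigenspaces of $L$; the gap then says precisely that, listing the eigenvalues of $L$ with multiplicity by non-increasing modulus, there is a strict drop between the $2h$-th and the $(2h+1)$-th, with $V_1^{(h)}$ the span of the top $2h$ generalized eigenspaces and $U_1^{(h)}=V_2^{(h)}$ the span of the bottom $n-2h$. Hence the $V_1^{(h)}$ are nested, $U_1^{(h)}$ is the $L$-invariant complement of $V_1^{(h)}$, and the $L$-spectrum breaks into consecutive modulus-blocks of sizes $2,2,\dots$ (a final block of size $1$ when $n$ is odd). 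I would then put $W_1=V_1^{(1)}$, let $W_h$ be the $L$-invariant complement of $V_1^{(h-1)}$ inside $V_1^{(h)}$ for $2\le h\le\tfrac{\tilde n-1}{2}$, and $W_{\frac{\tilde n+1}{2}}=V_2^{(\frac{\tilde n-1}{2})}=U_1^{(\frac{\tilde n-1}{2})}$; this yields the asserted dimensions, the decomposition $\mathbb{R}^n=W_1\oplus\cdots\oplus W_{\frac{\tilde n+1}{2}}$, and, writing $\mu_i\ge\nu_i$ for the largest and smallest modulus of $L$ on $W_i$, the chain $\mu_1\ge\nu_1>\mu_2\ge\nu_2>\cdots$, because $\nu_h$ is the modulus of the $2h$-th eigenvalue and $\mu_{h+1}$ that of the $(2h+1)$-th.

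For the Lyapunov-function statements, fix $1\le h\le k\le\tfrac{\tilde n+1}{2}$ and a nonzero $x\in W_h\oplus\cdots\oplus W_k$. From $W_h\oplus\cdots\oplus W_k\subset W_1\oplus\cdots\oplus W_k=V_1^{(k)}$ and (recalling ${\rm Int}\,\bar{K}_k=K_k\setminus\{0\}=\{N_M\le 2k-1\}$, which is trivial when $k=\tfrac{\tilde n+1}{2}$) $V_1^{(k)}\setminus\{0\}\subset\{N_M\le 2k-1\}$, one gets $N_M(x)\le 2k-1$. Likewise $W_h\oplus\cdots\oplus W_k\subset W_h\oplus\cdots\oplus W_{\frac{\tilde n+1}{2}}=U_1^{(h-1)}$, and $U_1^{(h-1)}\setminus\{0\}\subset{\rm Int}\,\bar{K}^{h-1}=K^{h-1}\setminus\{0\}=\{N_m>2h-3\}$ (vacuous for $h=1$), so $N_m(x)\ge 2h-1$. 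Thus $2h-1\le N_m(x)\le N_M(x)\le 2k-1$, which is exactly the general claim; taking $h=k=i$ forces $N_m(x)=N_M(x)=2i-1$, i.e. $x\in\mathcal{N}$ and $N(x)=2i-1$.

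The conceptual heart --- and the only step that is not routine bookkeeping --- is the observation that running Perron--Frobenius along the \emph{whole} nested family of cones, rather than a single cone, pins the $L$-spectrum into size-two modulus blocks and forces the dominant subspaces to be nested, so the $W_h$ are genuine two-dimensional $\Phi(t)$-invariant blocks. The one genuinely new computation is the backward-time analogue of Proposition \ref{map-int} for the cones $\bar{K}^h$; after that the proof is just the cone identifications ${\rm Int}\,\bar{K}_h=\{N_M\le 2h-1\}$, ${\rm Int}\,\bar{K}^h=\{N_m>2h-1\}$ and the uniqueness in Lemma \ref{perron-thm}. One should keep a little care for the parity of $n$ in the last block $W_{\frac{\tilde n+1}{2}}$, and remember that here ``invariant'' refers only to the single fixed operator $\Phi(t)$.
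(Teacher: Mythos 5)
Your argument is correct and follows essentially the same route as the paper: apply the generalized Perron--Frobenius theorem (Lemma \ref{perron-thm}) to $\Phi(t)$ and the nested cones $\bar{K}_h$, identify each dominant subspace with the span of the top $2h$ generalized eigenspaces (which yields the nesting), take $W_h$ to be the invariant gap between consecutive dominant subspaces, and read off the $N$-bounds from the cone inclusions and the modulus chain from the spectral gaps. The only divergence is that your backward-time analogue of Proposition \ref{map-int} and the second application of Lemma \ref{perron-thm} to $\Phi(t)^{-1}$ on $\bar{K}^h$ are unnecessary: the paper obtains $N_m(x)\geq 2h-1$ for nonzero $x$ in the complementary subspace directly from the conclusion $V_2\cap \bar{K}_{h-1}=\{0\}$ of Lemma \ref{perron-thm} applied to $\bar{K}_{h-1}$ itself, since the complement of $\bar{K}_{h-1}$ is contained in $K^{h-1}$.
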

\begin{proof}
For any fixed $t>0$, It follows from Lemma \ref{map-int} and Remark \ref{K-cone} that $\Phi(t)$ and $\bar{K}_h$ satisfy the assumptions in Lemma \ref{perron-thm}. As a consequence, if we let $d_h=\text{max}\{\text{dim}V|V\ \text{a subspace},\ V\subset \bar{K}_h\}$, then there exist subspaces $V_h^1$, $V_h^2$ which are invariant under $\Phi(t)$ satisfying ${\rm dim}V_h^1=d_h$, ${\rm dim}V_h^2=n-d_h$, $\mathbb{R}^n=V_h^1\oplus V_h^2$, $V_h^1\subset \bar{K}_h$ and $V_h^2\cap\bar{K}_h=\{0\}$. Moreover, if $\sigma_h^1$ and $\sigma_h^2$ are the spectra of the restriction of $\Phi(t)$ to $V_h^1$ and $V_h^2$, then for any $\lambda^1\in \sigma_h^1$ and $\lambda^2\in \sigma_h^2$, one has $|\lambda^1|>|\lambda^2|$.

 Since $K_1\subset K_2\subset\cdots\subset K_{\frac{\tilde{n}+1}{2}}$, we have $V_1^1\subset V_2^1\subset\cdots\subset V_{\frac{\tilde{n}+1}{2}}^1$ and $V_1^2 \supset V_2^2\supset \cdots\supset V_{\frac{\tilde{n}+1}{2}}^2$.  Let $W_h=V_h^1\cap V_{h-1}^2$, for $h=1,\cdots,\frac{\tilde{n}+1}{2}$ (here $V_0^2=\mathbb{R}^n$). Then it is clear that all these $W_h$'s are invariant under $\Phi(t)$.
 Moreover,
 \[
 \text{dim} W_h=d_h-d_{h-1};\quad W_h\cap\bar{K}_{h-1}=\{0\}.
 \]
By Lemma \ref{dim-cone}, $d_h=2h$ for $h=1,\cdots,\frac{\tilde{n}-1}{2}$. Then it yields that
$\text{dim} W_h=2$ for $h=1,\cdots \frac{\tilde{n}-1}{2},$ and $\text{dim} W_{\frac{\tilde{n}+1}{2}}=2\textnormal{ or }\,1$ (for $n\text{ being even or odd} $), and
$$\mathbb{R}^n=W_1\oplus W_2\oplus\cdots\oplus W_{\frac{\tilde{n}+1}{2}}.$$
Note also that $W_h\oplus\cdots\oplus W_k\subset V_k^1\cap V_{h-1}^2$, $V_k^1\subset K_k$ and $V_{h-1}^2\cap\bar{K}_{h-1}=\{0\}$. Then for any nonzero vector $x\in W_h \oplus\cdots\oplus W_k$, one has $x\in K_{k}$ but $x\notin \bar{K}_{h-1}$ (here $\bar{K}_{0}=\{0\}$). So, $N_M(x)\leq 2k-1$ and $N_m(x)\geq 2h-1$. In particular, for $h=k$, $N_M(x)=N_m(x)=2h-1$. Finally, the fact that $W_h\subset V_h^1$, $W_{h+1}\subset V_h^2$ and $|\lambda^1|>|\lambda^2|$ whenever $\lambda^1\in \sigma_h^1$ and $\lambda^2\in \sigma_h^2$ implies that, if $\nu_h$ and $\mu_h$ are the minimum and maximum module of characteristic values of $\Phi(t)|W_h$, then $\nu_h>\mu_{h+1}$ for $h=1,\cdots,\frac{\tilde{n}-1}{2}$. Thus, we have proved the lemma.
\end{proof}

\section{Transversality}
In this section, we will prove that stable and unstable manifolds of two hyperbolic periodic solutions (or a hyperbolic equilibrium and a hyperbolic periodic orbit) of \eqref{feedback-equation} intersect transversely. Furthermore, we will point out that, under certain condition, stable and unstable manifolds of two hyperbolic equilibriums also intersect transversely.

Before we proceed our approach, it deserves to point out that a change of variables $x_i\rightarrow \mu_{i}x_i$, where $\mu_i \in \{-1,1\}$ are appropriately chosen, yields a MCFS \eqref{feedback-equation} with negative feedback, where $\delta_1=-1$ and $\delta_i=1,2\leq i \leq n$. Hereafter, we always assume that $\delta_1=-1$ and $\delta_i=1,2\leq i \leq n$.

Let $p(t)$ be an $\omega$-periodic solution of \eqref{feedback-equation} with $\omega>0$ and $\Gamma$ be the orbit of $p(t)$. Consider the linearized equation of \eqref{feedback-equation} along $p(t)$:
\begin{equation*}
  \dot{z}=Df(p(t))z,\quad t\in \mathbb{R},\quad z\in \mathbb{R}^n,
\end{equation*}
which is an $\omega$-periodic linear equation in the form of \eqref{linearity-com-system}. $p(t)$ is called {\it hyperbolic} if none of its Floquet multipliers is on the unit-circle $\mathbb{S}^1\subset \mathbb{C}$ except $1$.\par
Let $A\subset \mathbb{R}^n$ be a nonempty subset of $\mathbb{R}^n$, the distance from a point $x_0\in \mathbb{R}^n$ to $A$ is defined by $d(x_0,A)=\underset{x\in A}{{\rm inf}}\norm{x_0-x}.$
We write $\phi(t,x)$ as the solution of \eqref{feedback-equation}-\eqref{feedback-condition} satisfying $\varphi(0,x)=x$. Now define the {\it stable {\rm (resp}. unstable{\rm )} manifold} $W^s(\Gamma)$ (${\rm resp}.\ W^u(\Gamma)$) of $\Gamma$ as
\begin{equation*}
\begin{split}
&W^s(\Gamma)=\{x\in \mathbb{R}^n| \ \lim\limits_{t \to +\infty}{d(\varphi(t,x),\Gamma)}=0 \},\\
&W^u(\Gamma)=\{x\in \mathbb{R}^n| \ \lim\limits_{t \to +\infty}{d(\varphi(-t,x),\Gamma)}=0 \}.
\end{split}
\end{equation*}
It is known that $W^s(\Gamma)$ and $W^u(\Gamma)$ are $C^1$-manifolds (See \cite[Chapter 1]{Chicon}). Two smooth submanifolds $M$ and $N$ of $\mathbb{R}^n$ are said to {\it intersect transversely} (written as $M\pitchfork N$) if either $M\cap N=\emptyset$ or at each point $x\in M\cap N$, the tangent spaces $T_xM$, $T_xN$ span $\mathbb{R}^n$. For briefly, we write $\varphi(t)=\varphi(t,x)$.

Our main result in this section is the following
\begin{theorem}\label{transversality}
Let $\varphi(t)$ be a solution of \eqref{feedback-equation} which connects two hyperbolic periodic orbits $\Gamma^{-}$ and $\Gamma^{+}$. Then
\begin{equation*}
W^u(\Gamma^{-})\pitchfork W^s(\Gamma^{+}).
\end{equation*}
\end{theorem}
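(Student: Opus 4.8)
The plan is to adapt the transversality argument of Fusco and Oliva \cite{fusco1990} to the negative-feedback setting, replacing their use of strong monotonicity (a rank-$1$ cone) by the high-rank cones $\bar K_h$ and the integer-valued functional $N$ developed in Section 2.

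First I would reduce the statement to a dimension count. Fix a point $x=\varphi(0)$ on the connecting orbit; since the flow maps $W^u(\Gamma^-)$ and $W^s(\Gamma^+)$ diffeomorphically onto themselves, transversality holds along the whole orbit as soon as $T_xW^u(\Gamma^-)+T_xW^s(\Gamma^+)=\RR^n$, and the same argument then applies verbatim to any point of $W^u(\Gamma^-)\cap W^s(\Gamma^+)$, which is the content of $W^u(\Gamma^-)\pitchfork W^s(\Gamma^+)$. The variational equation $\dot z=Df(\varphi(t))z$ along $\varphi$ is, under the sign normalization fixed at the beginning of this section, of the form \eqref{linearity-com-system}--\eqref{positive-a-a}, and it is asymptotically $\omega^\mp$-periodic and hyperbolic as $t\to\mp\infty$. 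Identifying $V^u:=T_xW^u(\Gamma^-)$ (resp.\ $V^s:=T_xW^s(\Gamma^+)$) with the space of solutions bounded on $(-\infty,0]$ (resp.\ on $[0,+\infty)$), and letting $Z:=V^u\cap V^s$ be the space of solutions bounded on all of $\RR$ --- which always contains $\dot\varphi(0)$ --- one has $\dim(V^u+V^s)=\dim V^u+\dim V^s-\dim Z\le n$, so $\dim Z\ge\dim W^u(\Gamma^-)+\dim W^s(\Gamma^+)-n$ automatically. Thus transversality at $x$ is exactly the reverse inequality $\dim Z\le\dim W^u(\Gamma^-)+\dim W^s(\Gamma^+)-n$, and the whole problem becomes an upper bound for $\dim Z$.

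To produce that bound I would use $N$ at both ends. By Proposition \ref{rootspace-arra} applied to the monodromy operators of the periodic linearizations along $\Gamma^\pm$, $\RR^n$ splits into $\Phi$-invariant blocks $W_1^\pm\oplus W_2^\pm\oplus\cdots$ (two-dimensional except possibly the last) with strictly separated Floquet moduli and with $N\equiv 2i-1$ on $W_i^\pm\setminus\{0\}$; writing $N(\dot p^\pm)=2j^\pm-1$, the center line $\RR\dot p^\pm$ sits in $W_{j^\pm}^\pm$, the bounded-backward subspace $E^{cu}_-$ at $\Gamma^-$ and the bounded-forward subspace $E^{cs}_+$ at $\Gamma^+$ are unions of consecutive blocks together with at most one extra line inside $W_{j^-}^-$ (resp.\ inside $W_{j^+}^+$), and $\dim W^u(\Gamma^\pm)$ is pinned down by $j^\pm$ and by which side of the unit circle the second multiplier of $W_{j^\pm}^\pm$ lies on. Since $N(\dot\varphi(t))$ is nonincreasing, defined off isolated $t$, constant for $|t|$ large (Lemma \ref{zero-property}) and tends to $2j^\mp-1$ as $t\to\mp\infty$, one gets $j^+\le j^-$. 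Now for $z\in Z\setminus\{0\}$ put $N(z(-\infty))=2\alpha(z)-1$ and $N(z(+\infty))=2\beta(z)-1$; Lemma \ref{zero-property} gives $\beta(z)\le\alpha(z)$, while the block decomposition shows $\alpha(z)\le j^-$, $\beta(z)\ge j^+$, and that $Z^{(-)}_k:=\{z:\alpha(z)\le k\}$ and $Z^{(+)}_{\ge k}:=\{z:\beta(z)\ge k\}$ are linear subspaces (defined by vanishing of prescribed Floquet components). From $\beta\le\alpha$ one has $Z^{(+)}_{\ge k}\cap Z^{(-)}_{k-1}=\{0\}$ for every $k$; taking $k=j^++1$ and combining $\dim Z^{(+)}_{\ge j^++1}\le\dim Z-\dim Z^{(-)}_{j^+}$ with the jump bound $\dim Z^{(-)}_k-\dim Z^{(-)}_{k-1}\le\dim(\text{block-}k\text{ part of }E^{cu}_-)$ summed downward from $j^+$, and with $\mathrm{codim}_Z Z^{(+)}_{\ge j^++1}\le\mathrm{codim}_{E^{cs}_+}(\text{blocks }\le j^+\text{ part})$, the block dimensions --- which encode $\dim W^u(\Gamma^\pm)$ through $j^\pm$ --- telescope to give precisely $\dim Z\le\dim W^u(\Gamma^-)+\dim W^s(\Gamma^+)-n$, which is what was needed.

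The main obstacle I expect is the borderline case $j^-=j^+$. There $N(\dot\varphi(\cdot))$ is forced to be globally constant, $\dot\varphi(t)\in\mathcal N$ for all $t$, and the jump estimate above degenerates; one must then separately exclude the configuration $\dim W^u(\Gamma^-)<\dim W^u(\Gamma^+)$ (in which a transverse intersection would be empty, so a connecting orbit in that configuration would contradict the very statement being proved). Ruling this out --- using the position of the second multiplier of $W_{j^\pm}^\pm$ relative to the unit circle together with the constraint $\dot\varphi(t)\in\mathcal N$ --- along with the block-dimension bookkeeping in the generic case, is the heart of the argument, and it is exactly the step where the rank-$1$ cone of the classical monotone theory must be traded for the even-rank cones $\bar K_h$ of Section 2.
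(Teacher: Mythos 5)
Your reduction to the bound $\dim Z\le\dim W^u(\Gamma^-)+\dim W^s(\Gamma^+)-n$ for the space $Z$ of globally bounded solutions of the variational equation is a legitimate (and different) framing: the paper instead argues directly at a point $\varphi(0)$ of the connecting orbit, exhibiting a subspace of $T_{\varphi(0)}W^u(\Gamma^-)$ inside the cone $K_{h^--1}$ and a complementary subspace of $T_{\varphi(0)}W^s(\Gamma^+)$ inside $K^{h^--1}$, so that $K_{h^--1}\cap K^{h^--1}=\{0\}$ forces the tangent spaces to span $\mathbb{R}^n$ (Proposition \ref{transversal1}); Corollary \ref{solution-space} transports these subspaces along the orbit. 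In the case $h^+<h^-$ either route works, and the paper's is considerably shorter than your telescoping count, whose individual steps (that $Z^{(\pm)}$ are linear subspaces cut out by ``Floquet components,'' the jump bounds, the codimension estimate) you assert but do not establish.

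The genuine gap is the borderline case $h^+=h^-=h$, which you correctly identify as the heart of the matter and then do not resolve. In your own bookkeeping this is the configuration where the bounded-backward part of the block $W^-_h$ and the bounded-forward part of $W^+_h$ are both one-dimensional, so your count gives $\dim Z\le 1$ while transversality requires $\dim Z\le 0$, contradicting $\dot\varphi(0)\in Z$; saying that one ``must exclude'' the configuration $\dim W^u(\Gamma^-)<\dim W^u(\Gamma^+)$ by ``using the position of the second multiplier together with $\dot\varphi(t)\in\mathcal N$'' is not an argument. The paper's resolution occupies all of Lemma \ref{zero-constant} and Proposition \ref{transversal2}: one first proves that $N(y-x)=2h-1$ for \emph{all} distinct $x,y\in\Gamma^+\cup Q\cup\Gamma^-$ (a statement about secants, not tangents, proved by a connectedness argument on $\Gamma^\pm\times\Gamma^\pm$ plus limiting arguments along $Q$), and then blows up the differences $\varphi(t_k)-p^-(0)$ along $t_k=-k\omega^-$ to produce a limit direction $w$ with $N(w)=2h-1$, hence $w\in W^-_h$, which is tangent to $W^u(\Gamma^-)$ and, via the invariant fibration $\mathcal F^-$, linearly independent of $\dot p^-(0)$. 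This forces $W^-_h\subset T_{p^-(0)}W^u(\Gamma^-)$, i.e.\ the ``bad'' multiplier configuration cannot occur in the presence of a connecting orbit. Without an argument of this type your proof does not close, so as it stands the proposal is incomplete precisely at the step that carries the paper's main new content.
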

The proof of Theorem \ref{transversality} can be broken into Proposition \ref{transversal1} and Proposition \ref{transversal2}.  Before proving these propositions, we give some notations and useful lemmas.

 Hereafter, we let $Q=\{x|x=\varphi(t),\ t\in (-\infty,+\infty)\}$ with the initial value $\varphi(0)=x_0$ and the two hyperbolic periodic orbits $\Gamma^{\pm}=\{x|x=p^{\pm}(t),\ t\in [0,\omega^{\pm})\}$, where $\omega^{\pm}>0$ is the minimum positive period of $p^{\pm}(t)$ in Theorem \ref{transversality}.  Since $\Gamma^{\pm}$ is hyperbolic, there exists a tubular neighborhood of $\Gamma^{\pm}$ and a $C^1$-fibration $\mathcal{F}^{\pm}$ (see e.g., \cite{Hirsch}), which is positively (or negatively) invariant under the flow of \eqref{feedback-equation}. The existence of such foliation implies that $p^{\pm}(t)$ can be chosen so that:
\begin{equation}\label{limit-approach}
\lim\limits_{t \to \pm \infty}{\|\varphi(t)-p^{\pm}(t)\|}=0.
\end{equation}

\begin{lemma}\label{linear-variation}
There exist $t_0>0$ and $h^+,h^-\in \{1,2,\cdots,\frac{\tilde{n}+1}{2}\}$, satisfying $h^+\le h^-$,  such that
\begin{equation*}
N(\dot{\varphi}(t))=2h^+-1,\quad \text{for } t\geq t_0;\quad N(\dot{\varphi}(t))=2h^--1,\quad \text{for } t\leq -t_0.
\end{equation*}
Moreover, $N(\dot{p}^+(t))=2h^+-1$ and $N(\dot{p}^-(t))=2h^--1$, for all $t\in \mathbb{R}$.
\end{lemma}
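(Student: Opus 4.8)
The plan is to apply the Floquet-type structure of Proposition~\ref{rootspace-arra} to the two periodic linearizations, and then to transport the information about $N$ along the heteroclinic orbit $\varphi$ using the monotonicity of $N$ from Lemma~\ref{zero-property}. First I would observe that $\dot\varphi(t)$ is itself a nontrivial solution of the variational equation $\dot z = Df(\varphi(t))z$, which is of the form \eqref{linearity-com-system} with coefficients satisfying \eqref{positive-a-a} (this uses \eqref{feedback-condition} and the normalization $\delta_1=-1$, $\delta_i=1$). Hence Lemma~\ref{zero-property}(iii) applies directly to $z(t)=\dot\varphi(t)$: there is $t_0>0$ and integers $l,s$ with $\dot\varphi(t)\in\mathcal N$ and $N(\dot\varphi(t))=l$ for $t\ge t_0$, and $N(\dot\varphi(t))=s$ for $t\le -t_0$. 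Since $N$ on $\Lambda$ takes values in $\{1,3,\dots,\tilde n\}$, we may write $l=2h^+-1$ and $s=2h^--1$ with $h^\pm\in\{1,\dots,\frac{\tilde n+1}{2}\}$. The inequality $h^+\le h^-$, i.e. $l\le s$, is immediate from the monotonicity in Lemma~\ref{zero-property}(i): $N(\dot\varphi(t))$ is nonincreasing in $t$ (through points where it is defined), so its eventual value as $t\to+\infty$ cannot exceed its eventual value as $t\to-\infty$.

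Next I would identify $l$ and $s$ with the Floquet data of $\Gamma^\pm$. Consider the $\omega^+$-periodic linear system $\dot z = Df(p^+(t))z$; by Proposition~\ref{rootspace-arra} (applied with the time-$\omega^+$ map $\Phi^+(\omega^+)$) $\mathbb R^n$ splits into the invariant subspaces $W_h^+$, with $N(\xi)=2h-1$ for every nonzero $\xi\in W_h^+$, and with strictly separated spectral moduli. Now $\dot p^+(t)$ is a bounded, nontrivial, $\omega^+$-periodic solution of this variational system, hence corresponds to the Floquet multiplier $1$; boundedness away from $0$ forces $\dot p^+(0)$ to lie in the single block $W_{h^+}^+$ on which the time-$\omega^+$ map has modulus $1$ (any component in a block with all moduli $>1$ would make $\|\dot p^+(n\omega^+)\|\to\infty$, and likewise a component in a block with all moduli $<1$ would be killed going forward while a nonzero periodic solution persists — more carefully, a nonzero periodic solution must be an eigenvector of $\Phi^+(\omega^+)$ with eigenvalue exactly $1$, and by the gap in the moduli all such eigenvectors lie in exactly one $W_h^+$). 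Therefore $\dot p^+(t)\in W_{h^+}^+\setminus\{0\}$ for all $t$, so $\dot p^+(t)\in\mathcal N$ and $N(\dot p^+(t))=2h^+-1$ for all $t\in\mathbb R$; similarly $N(\dot p^-(t))=2h^--1$ for all $t$. It remains to match this $h^\pm$ with the $h^\pm$ coming from $\varphi$. This follows from \eqref{limit-approach}: differentiating, $\dot\varphi(t)-\dot p^+(t)\to 0$ as $t\to+\infty$ (one should justify the convergence of derivatives, e.g. using that both $\varphi$ and $p^+$ enter a compact tubular neighborhood of $\Gamma^+$ on which $f$ and $Df$ are bounded and uniformly continuous, together with the asymptotic-phase fibration $\mathcal F^+$ and standard exponential estimates). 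Since $\dot p^+(t)$ stays in the compact set $\{\dot p^+(s):s\in[0,\omega^+]\}\subset\mathcal N$, which is bounded away from the closed set $\mathbb R^n\setminus\mathcal N$, for $t$ large $\dot\varphi(t)$ lies in the same connected region of $\mathcal N$ as $\dot p^+(t)$, where $N$ is continuous and integer-valued, so $N(\dot\varphi(t))=N(\dot p^+(t))=2h^+-1$ for $t$ large; combined with the first paragraph this pins down $2h^+-1=l$. The argument at $t\to-\infty$ is identical and gives $2h^--1=s$.

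The main obstacle I anticipate is the $C^1$-asymptotic statement $\dot\varphi(t)\to\dot p^\pm(t)$: \eqref{limit-approach} is only a $C^0$ statement, and one cannot differentiate a limit for free. The cleanest way around it is to use the asymptotic phase directly — choose the parametrization of $p^+$ so that $\varphi(t)$ and $p^+(t)$ lie on the same fiber of $\mathcal F^+$, hence $\|\varphi(t)-p^+(t)\|$ decays exponentially; then write the difference $w(t)=\varphi(t)-p^+(t)$, note $\dot w = Df(p^+(t))w + R(t)$ with $\|R(t)\| = O(\|w(t)\|^2)$ by Taylor's theorem (using $C^1$-smoothness of $f$, or $C^2$; if only $C^1$ is assumed, replace the quadratic bound by $\|R(t)\|=o(\|w(t)\|)$ via uniform continuity of $Df$ on the compact tube), and conclude $\dot w(t)\to 0$ from the variation-of-constants formula and the Floquet bounds on $Df(p^+(t))$. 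Alternatively, and perhaps more robustly, one avoids differentiability of the limit altogether: it is enough to know that $\dot\varphi(t)$ eventually enters, and stays in, the cone-region determined by $\Gamma^+$'s Floquet block, which can be read off from Proposition~\ref{map-int} and Corollary~\ref{solution-space} applied to the variational flow along $\varphi$ together with the already-established eventual constancy of $N(\dot\varphi(t))$; the value of that constant must then coincide with $2h^+-1$ because $\dot\varphi(t)$ shadows $\dot p^+(t)$ in the $C^0$ sense, and $N$ is locally constant on $\mathcal N$ while $\{\dot p^+(s):s\}$ is a compact subset of $\mathcal N$. Either route is routine once set up; no new idea beyond Proposition~\ref{rootspace-arra} and Lemma~\ref{zero-property} is needed.
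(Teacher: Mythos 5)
Your proposal is correct and follows essentially the same three-step skeleton as the paper: Lemma~\ref{zero-property}(iii) applied to $\dot\varphi$ gives $t_0$ and $h^\pm$ (with $h^+\le h^-$ from monotonicity), periodicity pins down $N(\dot p^\pm(t))$ as a $t$-independent constant, and the two are matched by letting $t\to\pm\infty$. Two points of comparison are worth recording. First, for the constancy of $N(\dot p^\pm(t))$ you route through Proposition~\ref{rootspace-arra} and the Floquet block containing the eigenvalue $1$; the paper gets the same conclusion more cheaply from the fact that $\dot p^\pm$ is a periodic solution of its variational equation, so the nonincreasing, isolated-drop behavior of $N$ in Lemma~\ref{zero-property} forces $N(\dot p^\pm(t))$ to be defined and constant for all $t$. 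Your route is valid but imports spectral machinery that is not needed here. Second, the ``main obstacle'' you anticipate --- upgrading \eqref{limit-approach} from $C^0$ to $C^1$ --- is not an obstacle at all: since both $\varphi$ and $p^\pm$ solve the autonomous system \eqref{feedback-equation}, one has $\dot\varphi(t)=f(\varphi(t))$ and $\dot p^\pm(t)=f(p^\pm(t))$, so $\|\dot\varphi(t)-\dot p^\pm(t)\|=\|f(\varphi(t))-f(p^\pm(t))\|\to 0$ follows at once from \eqref{limit-approach} and uniform continuity of $f$ on a compact set containing $Q\cup\Gamma^+\cup\Gamma^-$. This is exactly the paper's one-line argument; the Taylor-expansion/variation-of-constants detour and the alternative cone argument you sketch are unnecessary. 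With that simplification your proof coincides with the paper's.
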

\begin{proof}
Clearly, $\dot{\varphi}(t)$ is a solution of the linear equation $\dot{y}=Df(\varphi(t))y$. Note that $Df(\varphi(t))$ is a coefficient matrix of type \eqref{linearity-com-system}, then the existence of $h^+$, $h^-$ and $t_0$ is confirmed by Lemma \ref{zero-property}(iii).

Because $\dot{p}^{\pm}(t)$ is a periodic solution of $\dot{y}=Df(p^{\pm}(t))y$, $N(\dot{p}^{\pm}(t))$ is well defined for all $t\in \mathbb{R}^n$ and independent of $t$. By \eqref{limit-approach}, we may assume that $\Gamma^{\pm}$ and $Q$ are in a compact set $M\subset \mathbb{R}^n$. Together with the uniform continuity of $f(x)$ on $M$,  \eqref{limit-approach} implies that
\begin{equation*}
  \lim\limits_{t \to \pm \infty}{\|\dot{\varphi}(t)-\dot{p}^{\pm}(t)\|}=0.
\end{equation*}
It then follows from continuity of $N$ that $N(\dot{p}^{\pm}(t))=2h^{\pm}-1$.
\end{proof}
Henceforth, we let $\Phi^{\pm}(t,s)$ and $\Phi(t,s)$ $(t\ge s)$, be the solution operator of the linear equations $\dot{y}=Df(p^{\pm}(t))y$ and $\dot{y}=Df(\varphi(t))y$, respectively. For briefly, we write $\Phi^{\pm}=\Phi^{\pm}(\omega^{\pm},0)$. Then $\dot{p}^{\pm}(0)$ is an eigenvector of $\Phi^{\pm}$ corresponding to the (simple) eigenvalue $1$. By virtue of Lemma \ref{rootspace-arra}, one may define the module of characteristic values of $\Phi^{\pm}$ by
\[
\mu^{\pm}_1\geq\nu^{\pm}_1>\mu^{\pm}_2\geq\nu^{\pm}_2>\cdots>\mu^{\pm}_{\frac{\tilde{n}+1}{2}}\geq \nu^{\pm}_{\frac{\tilde{n}+1}{2}},
\]
and hence,
\begin{equation}\label{spectrum}
\nu^-_{h^--1}>1,\quad \mu^+_{h^++1}<1,
\end{equation}
where $h^+$ and $h^-$ are defined in Lemma \ref{linear-variation} satisfying $h^+\leq h^-$. In the following, we will consider the case of (i) $h^+<h^-$; (ii) $h^+=h^-$, respectively.\par
\begin{proposition}\label{transversal1}
Let $\Gamma^+,\ \Gamma^-,$ and $\varphi$ be defined as in Theorem \ref{transversality}. Then if $h^+<h^-$, one has
 \[
 W^u(\Gamma^-)\pitchfork W^s(\Gamma^+).
 \]
\end{proposition}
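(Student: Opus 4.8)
The plan is to check transversality at a single point of the connecting orbit, say $x_{0}=\varphi(0)\in W^{u}(\Gamma^{-})\cap W^{s}(\Gamma^{+})$, and deduce it at every other point of $Q$ from the flow‑invariance of the two manifolds. Writing $U=T_{x_{0}}W^{u}(\Gamma^{-})$ and $S=T_{x_{0}}W^{s}(\Gamma^{+})$, the goal is $U+S=\mathbb{R}^{n}$. The guiding principle is that $U$ must lie in the low‑rank cone $\bar K_{h^{-}}$ while $S$ must lie in the high‑rank cone $\overline{K^{h^{+}-1}}$, and that, since $h^{+}<h^{-}$, these two pieces together already fill $\mathbb{R}^{n}$ once the dimension bookkeeping is done correctly.

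First I would record that, by \eqref{limit-approach} and uniform continuity of $Df$ on a compact set carrying $Q\cup\Gamma^{+}\cup\Gamma^{-}$, the equation $\dot y=Df(\varphi(t))y$ is asymptotic as $t\to\pm\infty$ to the $\omega^{\pm}$‑periodic equations $\dot y=Df(p^{\pm}(t))y$, whose Floquet data are those of Proposition \ref{rootspace-arra} for $\Phi^{\pm}$. Since $N(\dot p^{-}(t))=2h^{-}-1$ we have $\dot p^{-}(0)\in W_{h^{-}}^{-}$, so by the gap $\nu^{-}_{h^{-}-1}>1$ in \eqref{spectrum} the unstable subspace of $\Phi^{-}$ together with $\mathbb{R}\dot p^{-}(0)$ sits in $W_{1}^{-}\oplus\cdots\oplus W_{h^{-}}^{-}\subseteq\{0\}\cup\mathrm{Int}\,\bar K_{h^{-}}$ (Lemma \ref{perron-thm}(iii) and Remark \ref{K-cone}). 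Transporting this inclusion along $\varphi$ and using that $N_{M}$ is nonincreasing along solutions (Lemmas \ref{zero-property}, \ref{zero-property2}) gives $\Phi(t,0)U\subseteq\bar K_{h^{-}}$ for all $t$, in particular $U\subseteq\bar K_{h^{-}}$; the symmetric argument at $+\infty$, using $\mu^{+}_{h^{+}+1}<1$ and the backward flow, gives $\Phi(t,0)S\subseteq\overline{K^{h^{+}-1}}$ for all $t$, in particular $S\subseteq\overline{K^{h^{+}-1}}$. Counting the Floquet blocks and using hyperbolicity also yields $\dim U=\dim W^{u}(\Gamma^{-})\ge 2h^{-}-1$ and $\dim S=\dim W^{s}(\Gamma^{+})\ge n-2h^{+}+1$.

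It then remains to estimate $\dim(U\cap S)$. By the previous step $\Phi(t,0)(U\cap S)\subseteq\bar K_{h^{-}}\cap\overline{K^{h^{+}-1}}$ for every $t$, so $U\cap S$ is a space of solutions of the linearized equation trapped for all time in this intersection cone — equivalently with $N$ confined to $\{2h^{+}-1,\dots,2h^{-}-1\}$ — and its backward and forward images approach $E^{u}_{-}\oplus\mathbb{R}\dot p^{-}$ and $E^{s}_{+}\oplus\mathbb{R}\dot p^{+}$ respectively, with the flow line $\mathbb{R}\dot\varphi$ the only shared center direction. Re‑running the Perron–Frobenius/Floquet analysis of Proposition \ref{rootspace-arra} for this asymptotically hyperbolic equation — trimming the static bound $\mathrm{rank}\,(\bar K_{h^{-}}\cap\overline{K^{h^{+}-1}})=2(h^{-}-h^{+}+1)$ by the two units provided by the behaviour at the $\Gamma^{-}$– and $\Gamma^{+}$–ends — I expect to obtain $\dim(U\cap S)\le 2(h^{-}-h^{+})$. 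Combined with the dimension bounds above,
\[
\dim(U+S)=\dim U+\dim S-\dim(U\cap S)\ \ge\ (2h^{-}-1)+(n-2h^{+}+1)-2(h^{-}-h^{+})\ =\ n ,
\]
so $U+S=\mathbb{R}^{n}$, which is the desired transversality.

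The genuine obstacle is exactly this last estimate. The static cone–rank results (Propositions \ref{dim-cone}, \ref{map-int}) give only $\dim(U\cap S)\le 2(h^{-}-h^{+}+1)$, which is too weak by two; squeezing out the remaining two dimensions forces one to exploit the two‑sided asymptotics of the bounded solutions lying in $U\cap S$ — in particular that $\mathbb{R}\dot\varphi$ is the unique bounded solution sitting at the common center level $2h^{\pm}-1$ at the two ends. This is the one place where $h^{+}<h^{-}$ is essential: when $h^{+}=h^{-}$ there is no room between the two nested cones and this count degenerates, which is why that case is postponed to Proposition \ref{transversal2}.
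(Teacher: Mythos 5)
Your argument reduces transversality to the estimate $\dim(U\cap S)\le 2(h^--h^+)$, and that is exactly where it breaks down --- as you acknowledge yourself. The static cone computation only yields $U\cap S\subseteq \bar K_{h^-}\cap \bar K^{h^+-1}$, and this intersection contains subspaces of dimension $2(h^--h^++1)$ (e.g.\ $W_{h^+}\oplus\cdots\oplus W_{h^-}$ by Proposition \ref{rootspace-arra}), which is two more than you need. The proposed ``trimming by two units'' via the two--sided asymptotics of the bounded solutions in $U\cap S$ is only announced, not carried out; making it rigorous would require a Perron--Frobenius/Floquet theory for the nonautonomous, non--periodic linearization along $\varphi$, which is not available here (Proposition \ref{rootspace-arra} is a statement about a single linear map and is applied in the paper only at the two periodic ends). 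So as written this is a reduction to an unproved, and genuinely delicate, dynamical estimate rather than a proof.

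The paper sidesteps the intersection count entirely by splitting at the level $h^--1$ instead of playing $\bar K_{h^-}$ against $\bar K^{h^+-1}$. Since $\nu^-_{h^--1}>1$, the subspace $\Sigma^-=W^-_1\oplus\cdots\oplus W^-_{h^--1}$ is genuinely unstable, has dimension $2h^--2$, and lies in $K_{h^--1}$; since $h^+\le h^--1$ and $\mu^+_{h^++1}<1$, the subspace $\Sigma^+=W^+_{h^-}\oplus\cdots\oplus W^+_{\frac{\tilde{n}+1}{2}}$ is genuinely stable, has dimension $n-2h^-+2$, and lies in $K^{h^--1}$. Transporting these along the connecting orbit to $\varphi(0)$ (openness of $K_{h^--1}\setminus\{0\}$ and $K^{h^--1}\setminus\{0\}$ together with the forward/backward invariance of Corollary \ref{solution-space}) gives subspaces $\tilde\Sigma^-_0\subset T_{\varphi(0)}W^u(\Gamma^-)\cap K_{h^--1}$ and $\tilde\Sigma^+_0\subset T_{\varphi(0)}W^s(\Gamma^+)\cap K^{h^--1}$ whose dimensions add to exactly $n$ and which meet only at $0$ because $K_{h^--1}\cap K^{h^--1}=\{0\}$; hence they already span $\mathbb{R}^n$ and no bound on $\dim(U\cap S)$ is ever needed. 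If you want to salvage your route, the fix is precisely this: replace the pair $\bigl(\bar K_{h^-},\bar K^{h^+-1}\bigr)$ by the single complementary pair $\bigl(K_{h^--1},K^{h^--1}\bigr)$ --- which is also exactly where the hypothesis $h^+<h^-$ enters, and why the case $h^+=h^-$ must be treated separately in Proposition \ref{transversal2}.
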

\begin{proof}
 By Lemma \ref{rootspace-arra}, if we let $\Sigma^-$ be the eigenspace of $\Phi^-$ defined as $\Sigma^-=W^-_1\oplus\cdots \oplus W^-_{h^--1}$ , then \eqref{spectrum} implies that $\Sigma^-\subset T_{p^-(0)}W^u(\Gamma^-)$. Moreover, $\Sigma^-\subset K_{h^--1}$ and dim $ \Sigma^-=2h^--2$. Note that $W^u(\Gamma^-)$ is a smooth manifold and $K_{h^--1}\setminus\{0\}$ is an open set. Then, for the positive integer $j\in \mathbb{Z}$ sufficiently large, there is a $2h^--2$ dimensional subspace $\tilde{\Sigma}^-\subset T_{\varphi(-j\omega^-)}W^u(\Gamma^-)\cap K_{h^--1}$. Now let $\tilde{\Sigma}^-_0$ be the image of $\tilde{\Sigma}^-$ under $\Phi(0,-j\omega^-)$, then $\tilde{\Sigma}^-_0$ is a linear subspace of $\mathbb{R}^n$. It then follows from  Corollary \ref{solution-space} (i) that
\begin{equation*}
 \text{dim } \tilde{\Sigma}^-_0=2h^--2 \,\, \text{ and } \,\, \tilde{\Sigma}^-_0\subset T_{\varphi(0)}W^u(\Gamma^-)\cap K_{h^--1}.
\end{equation*}
On the other hand, let $\Sigma^+=W^+_{h^-}\oplus\cdots\oplus W^+_{\frac{\tilde{n}+1}{2}}$ be the eigenspace of $\Phi^+$. Then, by ${\mu}^+_{h^++1}<1$ and $h^+<h^-$, we have $\Sigma^+ \subset T_{p^+(0)}W^s(\Gamma^+)$. Similarly as above, one can find a subspace $\tilde{\Sigma}^+_0$ of $\mathbb{R}^n$ such that
\[
\tilde{\Sigma}^+_0 \subset T_{\varphi(0)}W^s(\Gamma^+)\cap K^{h^--1}\,\, \text{ with } \,\, \text{dim }\tilde{\Sigma}^+_0=n-2h^-+2.
\]
Recall that $K_{h^--1}\cap K^{h^--1}=\{0\}$, then $\tilde{\Sigma}^+_0\cap \tilde{\Sigma}^-_0=\{0\}$. Combining the fact that $\text{dim }\tilde{\Sigma}^+_0+\text{dim }\tilde{\Sigma}^-_0=n$, we obtain that $\tilde{\Sigma}^+_0\oplus \tilde{\Sigma}^-_0=\mathbb{R}^n$. This completes the proof.
\end{proof}

Now we consider the second case, that is, $h^+=h^-$. Motivated by \cite{fusco1990}, we first give the following technical lemma.
\begin{lemma}\label{zero-constant}
Let $\Omega=\Gamma^{+}\cup Q\cup\Gamma^{-}$, if $h^+=h^-=h$ in Lemma \ref{linear-variation}, then one has
\[
y-x\in \mathcal{N} \quad{\rm and}\ N(y-x)=2h-1
\]
for any distinct $x,y\in \Omega$.
\end{lemma}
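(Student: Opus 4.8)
The plan is to exploit the Lyapunov structure of $N$ together with the Floquet information recorded in Lemma \ref{linear-variation} and the cone-invariance from Section 2. The core idea is this: for distinct $x,y\in\Omega$, the difference $y(t)-x(t)$ of the corresponding solutions of \eqref{feedback-equation} is \emph{not} in general a solution of a linear system of type \eqref{linearity-com-system}, but by the mean value theorem applied componentwise it satisfies $\dot z = C(t)z$ where $C(t)=\int_0^1 Df\big(sy(t)+(1-s)x(t)\big)\,ds$ is again a coefficient matrix of type \eqref{linearity-com-system} (the sign conditions \eqref{feedback-condition} are preserved under averaging along the line segment, and this segment stays in the compact set $M$). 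Hence Lemma \ref{zero-property} applies to $z(t)=y(t)-x(t)$: $N(z(t))$ is defined off isolated $t$, is nonincreasing, drops strictly at the exceptional times, and is eventually constant as $t\to\pm\infty$.

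First I would treat the generic pair where both $x,y\in\Omega$ have full orbits; the cases involving the periodic orbits $\Gamma^\pm$ and the connecting orbit $Q$ then follow by the same device. The key step is to compute the two limiting values $N(z(t))$ as $t\to+\infty$ and as $t\to-\infty$. Using \eqref{limit-approach} (and its consequence that $\dot\varphi(t)-\dot p^\pm(t)\to 0$, which already gives $N(\dot\varphi)=2h-1$ everywhere past $t_0$), I would show that $z(t)=y(t)-x(t)$, after rescaling, approaches a specific direction. When $x,y$ both lie on $Q$, say $x(t)=\varphi(t+a)$, $y(t)=\varphi(t+b)$ with $a\ne b$, then as $t\to\pm\infty$ both tend to $p^\pm$, and $z(t)\to 0$ in the direction governed by the slowest/fastest Floquet exponents of $\Phi^\pm$; since $N(\dot p^\pm)=2h-1$ and $h^+=h^-=h$, both the forward and backward limits of $N(z(t))$ equal $2h-1$. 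Because $N$ is nonincreasing and its forward and backward limits coincide, Lemma \ref{zero-property}(ii) forces $N(z(t))\equiv 2h-1$ with $z(t)\in\mathcal N$ for \emph{all} $t$, and in particular at $t=0$ we get $y-x\in\mathcal N$ and $N(y-x)=2h-1$. When one or both of $x,y$ sit on $\Gamma^\pm$, one compares with the periodic solution directly: the difference of two solutions both asymptotic (in forward or backward time) to $\Gamma^+$ has its asymptotic direction lying in the stable/unstable subspaces sandwiched by $W^+_{h}\oplus\cdots$, and Proposition \ref{rootspace-arra} pins the value of $N$ on those sums; similarly at $\Gamma^-$.

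The main obstacle I anticipate is controlling the \emph{asymptotic direction} of $z(t)=y(t)-x(t)$ precisely enough to read off $N$ of the limit, i.e. proving that the limit of $N(z(t))$ is exactly $2h-1$ and not some smaller odd number. For the forward limit one knows $z(t)\to 0$ and wants $z(t)/\|z(t)\|$ (or $e^{-\lambda t}z(t)$ along suitable time sequences) to converge into a subspace on which $N\equiv 2h-1$; this requires a Floquet-type normal-form argument for the nonautonomous linear equation $\dot z=C(t)z$ near $\Gamma^+$, using the fiber structure $\mathcal F^+$ and the spectral gaps $\nu^+_{h}>\mu^+_{h+1}$ from \eqref{spectrum}, to guarantee the dominant surviving mode has $N$-value $2h-1$ rather than collapsing to a lower stratum. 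The delicate point is that a priori $N(z(t))$ could have already dropped below $2h-1$ before becoming constant; ruling this out is exactly where the hypothesis $h^+=h^-$ is essential — it makes the two one-sided limits equal, so monotonicity of $N$ forces no drop at all. Once the two-sided limit equal to $2h-1$ is established, the rest is a direct application of Lemma \ref{zero-property}, and I would close by noting the argument is symmetric in $x$ and $y$ and covers all nine combinations of $x,y$ ranging over $\Gamma^+$, $Q$, $\Gamma^-$.
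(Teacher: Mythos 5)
Your overall strategy matches the paper's: write $z(t)=y(t)-x(t)$ as a solution of a linear system of type \eqref{linearity-com-system} via the integral mean value theorem, invoke the Lyapunov monotonicity of $N$ from Lemma \ref{zero-property}, and argue that equal forward and backward limiting values of $N(z(t))$ force $N\equiv 2h-1$. However, there is a genuine gap in how you compute those limiting values. For $x=\varphi(r)$, $y=\varphi(s)$ on $Q$, the difference $z(t)=\varphi(s+t)-\varphi(r+t)$ does \emph{not} tend to $0$ in general: by \eqref{limit-approach} it tends to the nonzero periodic chord $q^{\pm}(t)=p^{\pm}(s+t)-p^{\pm}(r+t)$ unless $s-r$ is a multiple of $\omega^{\pm}$. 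So your ``asymptotic direction governed by the slowest/fastest Floquet exponents'' analysis is not the right picture, and it leaves unaddressed the actual base case: proving that $N$ of an \emph{arbitrary chord} $p^{+}(s)-p^{+}(r)$ of $\Gamma^{+}$ equals $2h-1$. The paper settles this first (Case (i)): the chord is itself a periodic solution of a linear system of type \eqref{linearity-com-system}, so by Lemma \ref{zero-property} and periodicity it lies in $\mathcal{N}$ with constant $N$ for all $t$; then connectivity of the set $M^{+}=\{y-x:\,x,y\in\Gamma^{+},\,y\neq x\}$ plus the expansion $p^{+}(s)-p^{+}(r)=(s-r)\dot p^{+}(r)+o(s-r)$ transfers the value $N(\dot p^{+})=2h-1$ from short chords to all chords. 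Without this step neither one-sided limit of $N(z(t))$ is pinned down, and the hypothesis $h^{+}=h^{-}$ alone cannot be brought to bear.

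The second issue is the degenerate subcase $|r-s|=k\omega^{+}$, where the forward limit of $z(t)$ really is $0$. You correctly identify this as the delicate point, but the Floquet normal-form machinery you propose is not developed in your sketch and is essentially the content of the later rescaling argument in Proposition \ref{transversal2}, not of this lemma. The paper resolves it much more cheaply: if $q(0)\notin\mathcal{N}$ then $N(q(-\varepsilon))>N(q(\varepsilon))$ by Lemma \ref{zero-property}(ii), yet the perturbed differences $q^{\pm}_{k}=\varphi(s\pm\varepsilon+\tfrac{1}{k})-\varphi(r\pm\varepsilon)$ have time shift not a multiple of the period, hence $N(q^{\pm}_{k})=2h-1$ by the already-established generic case, and continuity of $N$ on $\mathcal{N}$ forces $N(q(\pm\varepsilon))=2h-1$, a contradiction. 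You would need to supply either this perturbation argument or a complete substitute for it, and you would also need the final density step passing from $x,y\in Q$ to general $x,y\in\Omega$ (including one point on $\Gamma^{+}$ and one on $\Gamma^{-}$), which your sketch only gestures at.
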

\begin{proof}
Now choose $x,y\in \Omega$ with $x\neq y$. We consider the following three cases.\par
Case (i). If $x,\ y\in \Gamma^+$, then by the definition of $\Gamma^+$ there exist $r,\ s\in[0,\omega^+)$ with $r\neq s$, such that $x=p^+(r)$, $y=p^+(s)$. Let $q^+(t)=p^+(s+t)-p^+(r+t)$, then $q^+(t)$ is a periodic function and it satisfies the linear system \eqref{linearity-com-system} with
\[
a_{ij}(t)=\int_{0}^{1}\frac{\partial f_i}{\partial x_j}(u_{i-1}(l,t),u_i(l,t))dl,
\]
where $u_j(l,t)=lp_j^+(s+t)+(1-l)p_j^+(r+t)$, $j=i-1,i$. Here we write $a_{10}(t)=a_{1n}(t)$ and $x_0=x_n$. So $q^+(t)\in \mathcal{N}$ for all $t\in \mathbb{R}$, in particular, one has $q^+(0)=y-x\in \mathcal{N}$. Since $\Gamma^+\times \Gamma^+$ is homeomorphic to $S^1\times S^1$, let $\Delta=\{(x,x)|x\in \Gamma^+\}$, then $(\Gamma^+\times \Gamma^+)\setminus \Delta$ is a connected set. Note that the map $(\Gamma^+\times \Gamma^+)\setminus \Delta\rightarrow\mathbb{R}^n;
(x,y)\rightarrow y-x$ is continuous, then $M^+=\{y-x|x,y\in\Gamma^+,y\neq x\}$ is also connected. By the continuity of $N$ and connectivity of $M^+$, $N$ is a constant on $M^+$. Note also that
\[
y-x=p^+(s)-p^+(r)=(s-r)\dot{p}^+(r)+o(s-r),
\]
for $|s-r|$ (hence $||y-x||$) sufficiently small, one has
$N(y-x)=N(\dot{p}^+(r))$ if $\norm{y-x}$ is sufficiently small. Hence, by Lemma \ref{linear-variation}, $N(y-x)=2h-1$ for such $x$ and $y$, which implies that $N=2h-1$ on $M^+$.\par
For $x,y\in \Gamma^-$. The same argument yields that $M^-=\{y-x|x,y\in\Gamma^-,y\neq x\}\subset\mathcal{N}$ and $N=2h-1$ on $M^-$.\par
Case (ii). If $x,y\in Q$, then there exist $r,s\in (-\infty,+\infty)$ with $r\neq s$, such that $x=\varphi(r)$ and $ y=\varphi(s)$. Let $q(t)=\varphi(s+t)-\varphi(r+t)$, then it follows from \eqref{limit-approach} that
\begin{equation}\label{approach-differnece}
 \lim\limits_{t \to \pm \infty}{\|q(t)-q^{\pm}(t)\|}=0,
\end{equation}
where $q^{\pm}(t)=p^{\pm}(s+t)-p^{\pm}(r+t)$.

If $|r-s|$ is not a multiple of $\omega^+$ or $\omega^-$, then by Lemma \ref{zero-property}(iii), one has $q(t)\in \mathcal{N}$ for $|t|$ large enough. Moreover, by case (i) one has already known that $N(q^{\pm}(t))=2h-1$. So, \eqref{approach-differnece} implies that $N(q(t))=2h-1$ for all $t\in \mathbb{R}$. In particular, $N(y-x)=N(q(0))=2h-1$.

If $|r-s|=k\omega^+$ for some positive integer $k$, we also claim that $q(0)=y-x\in \mathcal{N}$. For otherwise, it follows from Lemma \ref{zero-property} (ii) that $N(q(-\varepsilon))>N(q(\varepsilon))$ for all $\varepsilon>0$ small, and hence,
\begin{equation}\label{q+-q-com}
\textnormal{ either } N(q(-\varepsilon))\neq 2h-1, \textnormal{ or } N(q(\varepsilon))\neq 2h-1.
\end{equation}
On the other hand, one can choose sequences $q^{\pm}_k=\varphi(s\pm \varepsilon+\frac{1}{k})-\varphi(r\pm \varepsilon)$. By the statement in the previous paragraph, one obtains $q^{\pm}_k\in \mathcal{N}$ and $N(q^{\pm}_k)=2h-1$ for $k$ sufficiently large and $q^{\pm}_k\rightarrow q(\pm\varepsilon)$ as $k\rightarrow \infty$, contradicting \eqref{q+-q-com}. Thus, $q(0)\in \mathcal{N}$. Moreover, choose $\tilde{q}_k=\varphi(s+\frac{1}{k})-\varphi(r), k=1,2,\cdots$, again we obtain $\tilde{q}_k\rightarrow q(0)$ and $N(\tilde{q}_k)=2h-1$. Consequently, $N(y-x)=N(q(0))=2h-1$.

Case (iii). For general $x,y\in \Omega$. If $y-x\in \mathcal{N}$, one can choose sequences $y_n,x_n\in Q$ approaching $y$ and $x$. So, by case (ii), $N(y-x)=N(y_n-x_n)=2h-1$. If $y-x\notin \mathcal{N}$, then there always exist $\bar{x},\bar{y}\in \Omega$ with $\|\bar{x}-x\|$ and $\|\bar{y}-y\|$ sufficiently small such that $\bar{y}-\bar{x}\in \mathcal{N}$ and $N(\bar{y}-\bar{x})\neq 2h-1$, which contradicts that $N(\bar{y}-\bar{x})= 2h-1$. We have proved this lemma.
\end{proof}

Now we finish the proof of Theorem \ref{transversality} by proving the following Proposition \ref{transversal2}.
\begin{proposition}\label{transversal2}
Let $\Gamma^+,\ \Gamma^-,$ and $\varphi$ be defined as in Theorem \ref{transversality}. If $h^+=h^-=h$, then
 \[
 W^u(\Gamma^-)\pitchfork W^s(\Gamma^+).
 \]
\end{proposition}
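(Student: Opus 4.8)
The plan is to reproduce, at an arbitrary point $x_0=\varphi(0)$ of the connecting orbit $Q$, the cone-and-dimension argument of Proposition \ref{transversal1}. The only feature that makes the case $h^+=h^-=h$ genuinely different is the \emph{central block} $W^+_h$ of the Floquet decomposition of $\Phi^+$: since $\dot p^+(0)\in W^+_h$ by Lemma \ref{rootspace-arra}, the multiplier $1$ of $\Phi^+$ sits inside the two-dimensional $W^+_h$, whose second Floquet multiplier $\lambda^+$ is real with $|\lambda^+|\ne 1$ by hyperbolicity; and the candidate subspace $W^+_h\oplus\cdots\oplus W^+_{(\tilde n+1)/2}$ lies in $T_{p^+(0)}W^s(\Gamma^+)$ only when $|\lambda^+|<1$. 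Establishing $|\lambda^+|<1$ is the crux, and this is exactly where the new ingredient, Lemma \ref{zero-constant}, enters.

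\textbf{Step 1 (the heart of the proof): $|\lambda^+|<1$.} Suppose not. Then \eqref{spectrum} and Lemma \ref{rootspace-arra} give that the strong-stable subspace of $\Phi^+$ equals $W^+_{h+1}\oplus\cdots\oplus W^+_{(\tilde n+1)/2}$, on which $N_m\ge 2h+1$, so it lies in $K^h$; being invariant under $\Phi^+$, its forward images under the linearized flow along $\Gamma^+$ remain in $K^h$. Put $w(t)=\varphi(t)-p^+(t)$. By \eqref{limit-approach}, $w(t)\to 0$ as $t\to+\infty$, and since $\mathcal{F}^+$ is the strong-stable fibration of $\Gamma^+$, the curve $\varphi(t)$ lies in the strong-stable fibre through $p^+(t)$, so for $t$ large $w(t)/\norm{w(t)}$ lies arbitrarily close to the (compact) unit sphere of the strong-stable subspace; since $K^h$ is open this forces $w(t)\in K^h$, i.e. $N(w(t))\ge 2h+1$, for all large $t$. (If convenient one may test only along $t=k\omega^+$, $k\to\infty$, where the fibre tangent is literally $W^+_{h+1}\oplus\cdots\oplus W^+_{(\tilde n+1)/2}$.) But $\varphi(t),p^+(t)$ are distinct points of $\Omega$ (the orbit $Q$ is disjoint from $\Gamma^+$), so Lemma \ref{zero-constant} gives $N(w(t))=2h-1$ for every $t$ — a contradiction. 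Hence $|\lambda^+|<1$, and $W^+_h\oplus W^+_{h+1}\oplus\cdots\oplus W^+_{(\tilde n+1)/2}$ is a subspace of $T_{p^+(0)}W^s(\Gamma^+)$ of dimension $n-2h+2$ contained in $\bar{K}^{h-1}$ (each nonzero vector there has $N_m\ge 2h-1>2h-3$).

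\textbf{Step 2 (cone construction and conclusion, as in Proposition \ref{transversal1}).} Since $\nu^-_{h-1}>1$ by \eqref{spectrum}, the subspace $W^-_1\oplus\cdots\oplus W^-_{h-1}\subset T_{p^-(0)}W^u(\Gamma^-)$ has dimension $2h-2$ and lies in $K_{h-1}$; using $\norm{\varphi(-j\omega^-)-p^-(-j\omega^-)}\to 0$, the $C^1$-smoothness of $W^u(\Gamma^-)$ and the openness of $K_{h-1}\setminus\{0\}$, for $j$ large there is a $(2h-2)$-dimensional subspace $\tilde\Sigma^-\subset T_{\varphi(-j\omega^-)}W^u(\Gamma^-)\cap K_{h-1}$, and by Corollary \ref{solution-space}(i) its image $\tilde\Sigma^-_0=\Phi(0,-j\omega^-)\tilde\Sigma^-$ satisfies $\tilde\Sigma^-_0\subset T_{x_0}W^u(\Gamma^-)\cap K_{h-1}$ with $\dim\tilde\Sigma^-_0=2h-2$. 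Symmetrically, Step 1 together with Corollary \ref{solution-space}(ii) produces an $(n-2h+2)$-dimensional subspace $\tilde\Sigma^+_0\subset T_{x_0}W^s(\Gamma^+)\cap K^{h-1}$. A nonzero vector of $K_{h-1}\cap K^{h-1}$ would satisfy $N_M\le 2h-3$ and $N_m\ge 2h-1$ simultaneously, so $K_{h-1}\cap K^{h-1}=\{0\}$, whence $\tilde\Sigma^-_0\cap\tilde\Sigma^+_0=\{0\}$; since $\dim\tilde\Sigma^-_0+\dim\tilde\Sigma^+_0=n$ we get $\tilde\Sigma^-_0\oplus\tilde\Sigma^+_0=\mathbb{R}^n$, so $T_{x_0}W^u(\Gamma^-)+T_{x_0}W^s(\Gamma^+)=\mathbb{R}^n$. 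As $x_0$ is an arbitrary point of $Q$, and every point of $W^u(\Gamma^-)\cap W^s(\Gamma^+)$ lies on some connecting orbit — for which the pair $h^\pm$ is the same, hence the present case applies, because $N(\dot p^\pm(\cdot))$ depends only on $\Gamma^\pm$ (Lemma \ref{linear-variation}) — transversality holds at every intersection point.

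\textbf{Main obstacle.} Everything except Step 1 is essentially a transcription of Proposition \ref{transversal1}. The delicate point in Step 1 is the passage from $\varphi(t)-p^+(t)\to 0$ to the statement that $\varphi(t)-p^+(t)$ is asymptotically confined to the strong-stable subspace of $\Phi^+$; I would secure this via the strong-stable fibration $\mathcal{F}^+$ already invoked for \eqref{limit-approach} (or, alternatively, via roughness of exponential dichotomies), after which the contradiction with the constancy $N(\varphi(t)-p^+(t))\equiv 2h-1$ supplied by Lemma \ref{zero-constant} — the paper's key new tool — closes the argument.
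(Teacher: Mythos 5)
Your argument is correct, and it rests on the same two pillars as the paper's proof --- Lemma \ref{zero-constant} applied to secants $\varphi(t)-p^{\pm}(t)$ between the connecting orbit and the periodic orbit, plus the invariant fibration $\mathcal{F}^{\pm}$ to locate the limiting secant direction --- but you deploy them at the opposite end of the orbit and in contrapositive form. You work at the $\omega$-limit $\Gamma^{+}$: assuming the second Floquet multiplier $\lambda^{+}$ of the central block $W^{+}_{h}$ satisfies $|\lambda^{+}|>1$, the secant $w(k\omega^{+})=\varphi(k\omega^{+})-p^{+}(0)$ is forced into the open cone $K^{h}$ (where $N\ge 2h+1$), contradicting $N\equiv 2h-1$ from Lemma \ref{zero-constant}; hence $W^{+}_{h}\oplus\cdots\oplus W^{+}_{(\tilde n+1)/2}\subset T_{p^{+}(0)}W^{s}(\Gamma^{+})$, and you count $(2h-2)+(n-2h+2)=n$ using the cone pair $K_{h-1},K^{h-1}$. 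The paper instead works at the $\alpha$-limit $\Gamma^{-}$: it rescales $\phi^{(k)}(t)=(\varphi(t+t_k)-p^{-}(t))/\|\varphi(t_k)-p^{-}(0)\|$ along $t_k=-k\omega^{-}$, passes (via Ascoli--Arzel\`a) to a solution $\phi^{(*)}$ of the limiting linear equation $\dot z=Df(p^{-}(t))z$, uses Lemma \ref{zero-constant} to get $N(\phi^{(*)})\equiv 2h-1$, and concludes that the limit direction $w$ lies in $W^{-}_{h}$ and, being tangent to the fiber $\mathcal{F}^{-}_{p^{-}(0)}$, is independent of $\dot p^{-}(0)$; this gives the sharper structural identity $T_{p^{-}(0)}W^{u}(\Gamma^{-})=W^{-}_{1}\oplus\cdots\oplus W^{-}_{h}$ and the dual count $2h+(n-2h)=n$ with $K_{h},K^{h}$. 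Your route is somewhat lighter (no limiting linear equation or compactness argument is needed, only openness of $K^{h}$ and compactness of the unit sphere of the stable eigenspace), while the paper's yields the explicit identification of $T_{p^{-}(0)}W^{u}(\Gamma^{-})$; your Step 1 proves the mirror-image fact $\dim W^{s}(\Gamma^{+})=n-2h+1$. Both are complete proofs, and your Step 2 is the expected transcription of Proposition \ref{transversal1} with the shifted cone indices.
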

\begin{proof}
 Choose a subsequence $\{t_k\}\subset \{-l\omega^-\}_{l=1}^\infty$ and let $w^k=\frac{\varphi(t_k)-p^-(0)}{||\varphi(t_k)-p^-(0)||}$ for $k=1,\cdots,n\}$. Without loss of generality, we may assume that
$w^k$ converges to $w$ as $k\to \infty$. Now, we write
$a^{(k)}_{ij}(t)=\int_{0}^{1}\frac{\partial f_i}{\partial x_j}(u^{(k)}_{i-1}(l,t),u^{(k)}_i(l,t))dl$, where $u_j^{(k)}(l,t)=l\varphi_j(t+t_k)+(1-l)p_j^-(t)$, $j=i-1,i$. By \eqref{limit-approach}, $\varphi(t)$ and $p^{\pm}(t)$ are bounded and uniformly continuous on $\mathbb{R}$, and hence, the sequence of matrix-valued functions $A^{(k)}(t)=(a^{(k)}_{ij}(t))$ is equicontinuous and uniformly bounded. By the Ascoli-Arzela's lemma, there is a subsequence of $A^{(k)}(t)$, still denoted by $A^{(k)}(t)$, which converges to $Df(p^-(t))$ uniformly for $t$ on  any compact interval.

Let $\phi^{(k)}(t)=\frac{\varphi(t+t_k)-p^-(t)}{\|\varphi(t_k)-p^-(0)\|}$. Then, by a standard result in the theory of ordinary differential equations \cite[Lemma 3.1,Chapter I]{Hale}, $\phi^{(*)}(t)$ is a solution of $\dot{z}=Df(p^-(t))z$ with $\phi^{(*)}(0)=w$ and $\phi^{(*)}(t)=\lim\limits_{k \to  \infty}{\phi^{(k)}(t)}$, uniformly for $t$ on any compact interval.
We claim that $\phi^{(*)}(t)\in \mathcal{N}$ and $N(\phi^{(*)}(t))=2h-1$ for all $t\in \mathbb{R}$. Indeed, by Lemma \ref{zero-property}(iii), one can find a $t_0>0$ such that $\phi^{(*)}(t)\in \mathcal{N}$ and $N(\phi^{(*)}(t))=N_1$ (resp. $N_2$) for all $t\geq t_0$ (resp. $t\leq -t_0$). Fix such $t_0$, it follows from the continuity of $N$ that $N(\phi^{(k)}(t_0))=N_1$ and $N(\phi^{(k)}(-t_0))=N_2$ for all $k$ sufficiently large. By virtue of Lemma \ref{zero-constant}, we obtain that $N_1=N_2=2h-1$, and hence, it entails that $\phi^{(*)}(t)\in \mathcal{N}$ and $N(\phi^{(*)}(t))=2h-1$ for all $t\in \mathbb{R}$. Thus we have proved the claim. \par

Noticing that $w^k=\phi^{(k)}(0)$, one has $w=\phi^{(*)}(0)$. Hence, the claim implies that $N(w)=2h-1$. Thus, by Lemma \ref{rootspace-arra}, we obtain that $w\in W^-_{h}$.
 Since $\varphi(t_k)\rightarrow p^-(0)$ and $\varphi(t_k)\in \mathcal{F}^-_{p^-(0)}$ for $k$ sufficiently large, $w$ is tangent to the fiber $\mathcal{F}^-_{p^-(0)}$ at $p^-(0)$. So, $w$ is linearly independent of $\dot{p}^-(0)$, and hence, $W^-_{h}=\text{span}\{w,\ \dot{p}^-(0)\}$. Moreover, noticing that $\varphi(t_k), p^-(0)\in W^u(\Gamma^-)$. Then $w\in T_{p^-(0)}W^u(\Gamma^-)$ and $T_{p^-(0)}W^u(\Gamma^-)\supseteq W^-_{1}\oplus\cdots\oplus W^-_{h}$. On the other hand, observing that $T_{p^-(0)}W^u(\Gamma^-)\subseteq W^-_{1}\oplus\cdots\oplus W^-_{h}$. Then, one has $T_{p^-(0)}W^u(\Gamma^-)= W^-_{1}\oplus\cdots\oplus W^-_{h}$.

  Now let $\Sigma^-=W^-_1\oplus\cdots \oplus W^-_{h}$ and $\Sigma^+=W^+_{h+1}\oplus\cdots \oplus W^+_{\frac{\tilde{n}+1}{2}}$. Recall that $\Sigma^+ \subset T_{p^+(0)}W^s(\Gamma^+)$. Then, similarly as the argument  in Proposition \ref{transversal1}, one can obtain the transversality, which complete our proof.
\end{proof}

 Now we will consider the case that there is an orbit $\varphi(t)$ connecting between a hyperbolic equilibrium and a hyperbolic periodic orbit or two hyperbolic equilibria.
  An equilibrium $e$ of \eqref{linearity-com-system} is called {\it hyperbolic} if $Df(e)$ does not possess any eigenvalue whose real part is equal to $0$. Denote by $W^s(e)$ and $W^u(e)$ the {\it stable} and {\it unstable} manifold of $e$, respectively. Then, we have:
\begin{lemma}\label{hyperbolic-fixed}
  Let $\varphi(t)$ be a solution of \eqref{feedback-equation}. Assume that $\varphi(t)$ connects two hyperbolic critical points $e^+,\ e^-$, then we have:
  \[
  {\rm dim}W^u(e^+)\leq {\rm dim}W^u(e^-).
  \]
  In particular, if ${\rm dim}W^u(e^+)< {\rm dim}W^u(e^-)$, then
  $  W^s(e^+)\pitchfork W^u(e^-).$
\end{lemma}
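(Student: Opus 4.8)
The plan is to adapt the cone-transport scheme of Propositions \ref{transversal1} and \ref{transversal2}, now comparing the hyperbolic splittings at the two endpoints $e^{-}$ and $e^{+}$. Write $E^u(e^-)=T_{e^-}W^u(e^-)$ and $E^s(e^+)=T_{e^+}W^s(e^+)$. The linearizations $\dot z=Df(e^{\pm})z$ are autonomous systems of type \eqref{linearity-com-system}, so Proposition \ref{rootspace-arra} (applied to $\Phi(t)=e^{Df(e^{\pm})t}$ with any fixed $t>0$) yields decompositions $\mathbb{R}^n=W^{\pm}_1\oplus\cdots\oplus W^{\pm}_{(\tilde n+1)/2}$ into invariant blocks on which $N\equiv 2i-1$, together with the strict gaps between the moduli of the characteristic values on consecutive blocks. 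Since those moduli equal $e^{t\,{\rm Re}\lambda}$ for $\lambda$ an eigenvalue of $Df(e^{\pm})$, each block is an interval in the ordering of ${\rm Re}\lambda$, and hyperbolicity (${\rm Re}\lambda\neq 0$ for all $\lambda$) forces the blocks all of whose eigenvalues have positive real part to form an initial segment and those all of whose eigenvalues have negative real part to form a final segment, with at most one ``mixed'' block left over. In particular each of $E^u(e^{\pm})$ and $E^s(e^{\pm})$ is a direct sum of whole blocks plus at most one line.

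I would first read off the input coming from $\varphi$. Let $h^{\pm}$ be the integers of Lemma \ref{linear-variation}, so $h^+\le h^-$ and $N(\dot\varphi(t))=2h^{\pm}-1$ for $\pm t$ large. As $\dot\varphi(t)$ is tangent to $Q\subset W^u(e^-)$ and $\varphi(t)\to e^-$ when $t\to-\infty$, every limit direction $w$ of $\dot\varphi(t)/\norm{\dot\varphi(t)}$ along some $t_k\to-\infty$ lies in $E^u(e^-)$, and local constancy of $N$ gives $N_M(w)\ge 2h^--1$; by Proposition \ref{rootspace-arra} this forbids $w\in W^-_1\oplus\cdots\oplus W^-_{h^--1}$, so $E^u(e^-)$ meets a block $W^-_{i_1}$ with $i_1\ge h^-$. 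Since $W^-_{i_1}$ then contains an eigenvalue with positive real part, the strict gaps force every block of smaller index — in particular $W^-_1,\dots,W^-_{h^--1}$ — to be fully unstable, hence $W^-_1\oplus\cdots\oplus W^-_{h^--1}\subset E^u(e^-)$, and they force $W^-_{h^-}$ not to be fully stable. Symmetrically $W^+_{h^++1}\oplus\cdots\oplus W^+_{(\tilde n+1)/2}\subset E^s(e^+)$ and $W^+_{h^+}$ is not fully unstable. Counting dimensions, $\dim W^u(e^-)\ge 2(h^--1)+1=2h^--1$ while $\dim W^s(e^+)\ge (n-2h^+)+1$, so $\dim W^u(e^+)\le 2h^+-1\le 2h^--1\le\dim W^u(e^-)$; this is the first assertion.

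For the transversality under $\dim W^u(e^+)<\dim W^u(e^-)$, it suffices to exhibit an integer $j$ with $W^-_1\oplus\cdots\oplus W^-_j\subset E^u(e^-)$ and $W^+_{j+1}\oplus\cdots\oplus W^+_{(\tilde n+1)/2}\subset E^s(e^+)$. Granting this: since $T_{\varphi(t)}W^u(e^-)\to E^u(e^-)$ and $T_{\varphi(t)}W^s(e^+)\to E^s(e^+)$ as $|t|\to\infty$, and $K_j\setminus\{0\}$, $K^j\setminus\{0\}$ are open, for $|t|$ large one finds subspaces of $T_{\varphi(t)}W^u(e^-)\cap K_j$ and of $T_{\varphi(t)}W^s(e^+)\cap K^j$ of dimensions $2j$ and $n-2j$; transporting them to $\varphi(0)$ by Corollary \ref{solution-space} keeps them inside $K_j$, resp. $K^j$, and inside $T_{\varphi(0)}W^u(e^-)$, resp. $T_{\varphi(0)}W^s(e^+)$; as $K_j\cap K^j=\{0\}$ and the dimensions add to $n$, they span $\mathbb{R}^n$, giving the transversality at $\varphi(0)$, hence at every point of $W^u(e^-)\cap W^s(e^+)$ by the flow. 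When $h^+<h^-$ the choice $j=h^+$ already works by the second paragraph.

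The delicate case is $h^+=h^-=:h$, where the second paragraph supplies only $W^-_1\oplus\cdots\oplus W^-_{h-1}\subset E^u(e^-)$ and $W^+_{h+1}\oplus\cdots\oplus W^+_{(\tilde n+1)/2}\subset E^s(e^+)$, two dimensions short. Here I would argue that, under the strict inequality, either $W^-_h$ is fully unstable (then $j=h$ works) or $W^+_h$ is fully stable (then $j=h-1$ works). Indeed, by the second paragraph $W^-_h$ is fully unstable or mixed and $W^+_h$ is fully stable or mixed; if both were mixed, the rigid structure around a mixed block — a fully unstable initial segment and a fully stable final segment flanking it — would give $\dim W^u(e^-)=2h-1=\dim W^u(e^+)$, against the hypothesis. (Alternatively, that $W^-_h$, resp. $W^+_h$, genuinely meets $E^u(e^-)$, resp. $E^s(e^+)$, can be obtained by running the Ascoli--Arzel\`a/limiting-solution argument of Proposition \ref{transversal2} with $p^{\pm}(t)$ replaced by the constant $e^{\pm}$, using the analogue of Lemma \ref{zero-constant} on $\Omega=\{e^+\}\cup Q\cup\{e^-\}$.) I expect this bookkeeping around the mixed block — together with checking that a mixed block cannot occur simultaneously at both ends once the unstable dimensions differ — to be the only real difficulty; the remaining degenerate possibility, $n$ odd with $h^-=(\tilde n+1)/2$, is trivial, since then the second paragraph forces $E^u(e^-)=\mathbb{R}^n$, i.e. $e^-$ is a source.
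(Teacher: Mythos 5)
Your proof is correct, and it uses the same toolkit as the paper (the block decomposition of Proposition \ref{rootspace-arra} at each equilibrium, the constancy of $N(\dot\varphi(t))$ for $\pm t$ large, and the cone-transport argument of Proposition \ref{transversal1}), but the two halves are organized differently. For the dimension inequality, the paper fixes $m=\min\{i:\nu_i^+<1\}$, notes $T_{e^+}W^s(e^+)\subset W^+_m\oplus\cdots$ lies in a cone of type $K^{\,\cdot}$, and deduces $h^+\ge m$ from $\dot\varphi(t)\in T_{\varphi(t)}W^s(e^+)$; you run the dual argument, extracting a limit direction $w$ of $\dot\varphi/\norm{\dot\varphi}$ and using $N_M(w)\ge 2h^--1$ (resp.\ $N_m\le 2h^+-1$) to force the first $h^--1$ blocks of $e^-$ to be fully unstable and the last blocks of $e^+$ to be fully stable. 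Both are sound and give the same bounds $\dim W^u(e^+)\le 2h^+-1\le 2h^--1\le \dim W^u(e^-)$. For the transversality, the paper disposes of the second assertion in one line by ``replacing $h^{\pm}$ with $[\frac{m^-+1}{2}]$, $[\frac{m^+}{2}]$ and repeating Proposition \ref{transversal1}''; your explicit search for an index $j$ with $W^-_1\oplus\cdots\oplus W^-_j\subset E^u(e^-)$ and $W^+_{j+1}\oplus\cdots\subset E^s(e^+)$, together with the mixed-block dichotomy in the case $h^+=h^-$, is a genuinely more careful treatment. It buys something real: the paper's recipe, read literally, does not obviously cover cases such as $\dim W^u(e^-)$ even with $\dim W^u(e^+)=\dim W^u(e^-)-1$ (e.g.\ $\dim W^u(e^-)=2$, $\dim W^u(e^+)=1$ gives $h^+=h^-=1$ and the paper's substitution would demand $T_{e^+}W^s(e^+)=\RR^n$), whereas your observation that not both $W^-_h$ and $W^+_h$ can be mixed when the unstable dimensions differ handles exactly these cases. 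The only stylistic caveat is that you invoke Lemma \ref{linear-variation}, which is stated for periodic orbits; for equilibria the existence of $h^{\pm}$ should be cited directly from Lemma \ref{zero-property}(iii), as the paper does.
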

\begin{proof}
By Lemma \ref{zero-property}(iii), there exist $h^+,\ h^-\in \{1,2,\cdots,\frac{\tilde{n}+1}{2}\}$ with $h^+\leq h^-$, and some $t_0>0$, such that $N(\dot{\varphi}(t))=2h^+-1$ (resp. $N(\dot{\varphi}(t))=2h^--1$) for all $t\ge t_0$ (resp. $t\le -t_0$).

  It follows from Proposition \ref{rootspace-arra} that there are $\frac{\tilde{n}+1}{2}$ invariant spaces $W^+_i,\ i=1,\cdots,\frac{\tilde{n}+1}{2}$ of the matrix $\exp\{Df(e^+)\}$ and the module of the corresponding eigenvalues satisfied $\mu_1^+\geq\nu_1^+>\mu_2^+\geq\nu_2^+>\cdots>\mu_{\frac{\tilde{n}+1}{2}}^+\geq \nu_{\frac{\tilde{n}+1}{2}}^+$. Let $m$ be the minimum integer that $\nu_m^+<1$, then
\[
T_{e^+}W^s(e^+)\subset W^+_{m}\oplus\cdots\oplus W^+_{\frac{\tilde{n}+1}{2}}\subset K^{m}.
\]
Clearly, $\lim\limits_{t \to  \infty}{\varphi(t)}=e^+$ implies that $\dot{\varphi}(t)\in T_{\varphi(t)}W^s(e^+)$. Since $W^s(e^+)$ is a $C^1$ manifold and $K^{m}\setminus\{0\}$ is an open set, one obtains that $\dot{\varphi}(t)\in T_{\varphi(t)}W^s(e^+)\subset K^{m}$ for all $t>0$ sufficiently large.

Recall that $N(\dot{\varphi}(t))=2h^+-1$ for all $t\ge t_0$. Then $h^+\ge m$. Note also that
 ${\rm dim} W^u(e^+)\leq 2m-1$. It follows that ${\rm dim}W^u(e^+)\leq 2h^+-1$. A similar argument with respect to $e^-$ yields that  ${\rm dim} W^u(e^-)\geq 2h^--1$. As a consequence, $${\rm dim}W^u(e^+)\leq 2h^+-1\le 2h^--1\le {\rm dim}W^u(e^-).$$
Let $m^+=\text{dim}W^u(e^+)$ and $m^-=\text{dim}W^u(e^-)$. If $m^+< m^-$, then one can replace $h^-$ and $h^+$ by $[\frac{m^-+1}{2}]$ and $[\frac{m^+}{2}]$ in Proposition \ref{transversal1}. Note that $h^+<h^-$ in this case. Then
 one can repeat the proof in Proposition \ref{transversal1} to obtain that $W^s(e^+)\pitchfork W^u(e^-)$.
\end{proof}

\begin{theorem}\label{general-trans}
Let $\varphi(t)$ be a solution of \eqref{feedback-equation}. Assume that $\varphi(t)$ connect two hyperbolic critical elements $\gamma^+,\ \gamma^-$(fixed point or periodic orbit), then:
  \[
  W^u(\gamma^-)\pitchfork W^s(\gamma^+)
  \]
  provided one of the following condition holds:
 \item [\emph{(i)}]One of these two hyperbolic critical elements is a periodic orbit and the other is fixed point.
 \item [\emph{(ii)}] $\gamma^+$ and $\gamma^-$ are fixed points, moreover {\rm dim} $W^u(\gamma^+)<{\rm dim}W^u(\gamma^-)$.
\end{theorem}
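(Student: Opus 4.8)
The plan is to reduce both cases to the machinery already built for periodic-periodic connections, namely Propositions \ref{transversal1} and \ref{transversal2} together with Lemma \ref{hyperbolic-fixed}. The key observation is that all the transversality arguments in this section depend only on two things: (a) the existence of integers $h^+\le h^-$ governing the eventual value of $N(\dot\varphi(t))$ as $t\to\pm\infty$, and (b) a splitting of the tangent spaces of the relevant invariant manifolds at the endpoints that is compatible with the nested cones $K_{h}$ and $K^{h}$ via Corollary \ref{solution-space}. Both ingredients survive when one or both critical elements is a hyperbolic equilibrium.

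For case (ii), where $\gamma^\pm = e^\pm$ are both fixed points with $\dim W^u(e^+)<\dim W^u(e^-)$, there is essentially nothing new to do: this is exactly the last assertion of Lemma \ref{hyperbolic-fixed}, which already produces $W^s(e^+)\pitchfork W^u(e^-)$ by replacing $h^\pm$ with $[\tfrac{m^++1}{2}]$ and $[\tfrac{m^-}{2}]$ in the proof of Proposition \ref{transversal1}. So the statement for case (ii) is immediate.

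For case (i), say $\gamma^-=\Gamma^-$ is a hyperbolic periodic orbit and $\gamma^+=e^+$ is a hyperbolic equilibrium (the reverse case being symmetric, with time reversed). First I would invoke Lemma \ref{zero-property}(iii) applied to the solution $\dot\varphi(t)$ of the nonautonomous linear equation $\dot y=Df(\varphi(t))y$ to obtain $t_0>0$ and integers $h^+\le h^-$ with $N(\dot\varphi(t))=2h^+-1$ for $t\ge t_0$ and $N(\dot\varphi(t))=2h^--1$ for $t\le -t_0$; as in Lemma \ref{linear-variation} and Lemma \ref{hyperbolic-fixed}, the periodicity of $\dot p^-(t)$ forces $N(\dot p^-(t))\equiv 2h^--1$, and the argument in Lemma \ref{hyperbolic-fixed} shows $\dim W^u(e^+)\le 2h^+-1$ while $T_{p^-(0)}W^u(\Gamma^-)\supseteq W^-_1\oplus\cdots\oplus W^-_{h^-}$ has dimension $\ge 2h^--1$ (in fact exactly $2h^--1$ or $2h^-$). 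Now split into the subcases $h^+<h^-$ and $h^+=h^-$. If $h^+<h^-$, I would carry over the proof of Proposition \ref{transversal1} verbatim: $\Sigma^-=W^-_1\oplus\cdots\oplus W^-_{h^--1}\subset K_{h^--1}$ lies in $T_{p^-(0)}W^u(\Gamma^-)$ by the spectral gap \eqref{spectrum}, one flows it forward under $\Phi(0,-j\omega^-)$ staying in $K_{h^--1}$ by Corollary \ref{solution-space}(i) to get $\tilde\Sigma^-_0\subset T_{\varphi(0)}W^u(\Gamma^-)\cap K_{h^--1}$; at the equilibrium end, $T_{e^+}W^s(e^+)\subset W^+_{h^+}\oplus\cdots\subset K^{h^+}\subseteq K^{h^--1}$ since $h^+\le h^--1$ (use here $\dim W^u(e^+)\le 2h^+-1$ so that $\dim W^s(e^+)\ge n-2h^++1\ge n-2h^-+2$), pull back to $\varphi(0)$ along the solution operator of $\dot z=Df(e^+)z$ staying inside $K^{h^--1}$ by Corollary \ref{solution-space}(ii), and conclude $\tilde\Sigma^-_0\oplus\tilde\Sigma^+_0=\mathbb{R}^n$ from $K_{h^--1}\cap K^{h^--1}=\{0\}$ plus a dimension count.

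The remaining subcase $h^+=h^-=h$ is the main obstacle, and here I would mirror Proposition \ref{transversal2}. The difficulty is establishing that $T_{p^-(0)}W^u(\Gamma^-)$ is \emph{exactly} $W^-_1\oplus\cdots\oplus W^-_h$ and, on the other side, that $T_{e^+}W^s(e^+)$ has the complementary cone property; unlike the periodic-periodic case we cannot simply quote Lemma \ref{zero-constant} as stated, since that lemma is phrased for $\Omega=\Gamma^+\cup Q\cup\Gamma^-$. The fix is to prove the analogue of Lemma \ref{zero-constant} for the set $\Omega'=\{e^+\}\cup Q\cup\Gamma^-$: the argument there is actually cleaner because differences $y-x$ with $x$ or $y$ equal to the fixed point $e^+$ cause no new degeneracies (one still uses the integral mean-value representation $a_{ij}(t)=\int_0^1 \partial_{x_j}f_i(\cdots)\,dl$ along segments, the limit relations \eqref{approach-differnece} with $q^+(t)\to 0$ replaced appropriately, and Lemma \ref{zero-property}), yielding $N(y-x)=2h-1$ for all distinct $x,y\in\Omega'$. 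Granting this, I would run the blow-up/rescaling argument of Proposition \ref{transversal2} along $t_k=-l\omega^-\to-\infty$ to produce a tangent vector $w\in T_{p^-(0)}W^u(\Gamma^-)$ with $N(w)=2h-1$, hence $w\in W^-_h$ by Proposition \ref{rootspace-arra}, which combined with $\dot p^-(0)\in W^-_h$ and linear independence (via the $C^1$-fibration $\mathcal F^-$) pins down $T_{p^-(0)}W^u(\Gamma^-)=W^-_1\oplus\cdots\oplus W^-_h$. With $\Sigma^-=W^-_1\oplus\cdots\oplus W^-_h$ and $\Sigma^+\subset T_{e^+}W^s(e^+)$ chosen inside $K^{h}$, the same cone-disjointness and dimension count as in Proposition \ref{transversal1} gives $W^u(\Gamma^-)\pitchfork W^s(e^+)$. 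The case where $\gamma^-$ is the equilibrium and $\gamma^+$ the periodic orbit follows by applying the above to the time-reversed system, which is again a monotone cyclic feedback system of the same type.
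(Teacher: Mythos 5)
Your overall strategy coincides with the paper's: case (ii) is exactly the last assertion of Lemma \ref{hyperbolic-fixed}, and case (i) is handled by feeding the equilibrium end into the $\Sigma^{\pm}$-splitting of Proposition \ref{transversal1} (the paper takes $\Sigma^-=W^-_1\oplus\cdots\oplus W^-_{h^+}$ and $\Sigma^+=W^+_{h^++1}\oplus\cdots\oplus W^+_{(\tilde n+1)/2}$, using $\dim W^s(e^+)\ge n-2h^++1$ from Lemma \ref{hyperbolic-fixed}, and then says ``similarly as in Proposition \ref{transversal1}''). In fact you are more careful than the paper in the subcase $h^+=h^-$: you correctly observe that one still needs $W^-_1\oplus\cdots\oplus W^-_{h}\subset T_{p^-(0)}W^u(\Gamma^-)$, which forces the blow-up argument of Proposition \ref{transversal2}, and that Lemma \ref{zero-constant} as stated does not apply because $\Omega$ is defined with two periodic orbits. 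The paper passes over this silently. Be aware, though, that your claim that the analogue for $\Omega'=\{e^+\}\cup Q\cup\Gamma^-$ is ``cleaner'' hides the real difficulty: for $x=\varphi(r),y=\varphi(s)\in Q$ the proof of Lemma \ref{zero-constant} pins down $N(q(t))$ by its limits at \emph{both} ends, and at the $+\infty$ end $q^+(t)\equiv 0$ when $\gamma^+$ is an equilibrium, so one must rescale $q(t)$ near $e^+$ (not merely ``replace \eqref{approach-differnece} appropriately'') to recover the value $2h-1$ there. This step needs to be written out.

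The one genuine error is your reduction of the reverse ordering ($\gamma^-$ an equilibrium, $\gamma^+$ a periodic orbit) to the first by time reversal. Reversing time replaces $f$ by $-f$, hence each $\delta_i$ by $-\delta_i$ and $\Delta$ by $(-1)^n\Delta$: for $n$ odd the time-reversed system is a monotone cyclic feedback system with \emph{positive} feedback, and the entire cone machinery of Section 2 (which rests on $N$ taking only odd values and on \eqref{positive-a-a}) no longer applies to it. So ``the time-reversed system is again a monotone cyclic feedback system of the same type'' is false in general, and this subcase must instead be argued directly and symmetrically, as the paper's ``without loss of generality'' implicitly intends: put $\Sigma^+=W^+_{h^++1}\oplus\cdots\oplus W^+_{(\tilde n+1)/2}\subset T_{p^+(0)}W^s(\Gamma^+)$ using $\mu^+_{h^++1}<1$, and obtain the complementary subspace of $T_{e^-}W^u(e^-)$ inside the appropriate $K_j$ from the dimension estimate for $W^u(e^-)$ in Lemma \ref{hyperbolic-fixed} (with, again, extra care in the boundary case $h^+=h^-$). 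With that reduction repaired, your argument matches the paper's and in places improves on it.
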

\begin{proof}
For (i), without loss of generality, we assume that $\gamma^+$ is an equilibrium and denote it by $e^+$. Then, from Lemma \ref{hyperbolic-fixed}, we have dim$W^s(e^+)\geq n-2h^++1$ which means that $W^+_{h^++1}\oplus\cdots\oplus W^+_{\frac{\tilde{n}+1}{2}}\subset T_{e^+}W^s(e^+)$ . Let $\Sigma^-=W^-_{1}\oplus\cdots\oplus W^-_{h^+}$ and $\Sigma^+=W^+_{h^++1}\oplus\cdots\oplus W^+_{\frac{\tilde{n}+1}{2}}$, then similarly as in Proposition \ref{transversal1}, we have $W^u(\gamma^-)\pitchfork W^s(\gamma^+)$.
For (ii), see Lemma \ref{hyperbolic-fixed}.
\end{proof}


\begin{thebibliography}{10}


\bibitem{Ang}S. B. Angenent,  The Morse-Smale property for a semi-linear parabolic equation, \emph{J. Differential Equations} \textbf{62} (1986), 427--442.
\bibitem{Chen}
M. X. Chen, X. Y. Chen and J. K. Hale,  Structural stability for time-periodic one-dimensional parabolic equations, \emph{J. Differential Equations} \textbf{96} (1992), 355--418.
\bibitem{Chicon}
C.~Chicone, Ordinary differential equations with applications. Springer New York, (2006).

\bibitem{CRo} R. Czaja and C. Rocha, Transversality in scalar reaction-diffusion equations on a circle, \emph{J. Differential Equations} \textbf{245} (2008), 692--721.

\bibitem{EL}
M. B. Elowitz and S. Leibler,  A synthetic oscillatory network of transcriptional
regulators, \emph{Nature}  \textbf{403} (2000), 335--338.


\bibitem{Fer} J. E. Ferrell,  T. Y. Tsai and Q. Yang,  Modeling the cell cycle: why do certain
circuits oscillate? \emph{Cell} \textbf{144} (2011), 874--885.

\bibitem{FrT} A. Fraser and J. Tiwari,  Genetic feedback-repression. ii. cyclic genetic system, \emph{J. Theor. Biol.} \textbf{47} (1974), 397--412.

\bibitem{Fung}
E. Fung et al. A synthetic gene-metabolic oscillator, \emph{Nature} \textbf{435} (2005), 118--122.

\bibitem{Fusco1987}
G.~Fusco and W. Oliva, Jacobi matrices and transversality, \emph{Proc. Roy. Soc. Edinburgh Sect. A} \textbf{109} (1988),  231--243.

\bibitem{fusco1990}
G.~Fusco and W. Oliva, Transversality between invariant manifolds of periodic orbits
  for a class of monotone dynamical systems, \emph{J. Dynam. Differential Equations} \textbf{2} (1990),  1--17.
\bibitem{fusco1991}
G.~Fusco and W. Oliva, A Perron theorem for the existence of invariant subspaces, \emph{Ann. Mat. Pura Appl. (4)} \textbf{160} (1991),  63--76.

\bibitem{Goodw}
B. C. Goodwin, Oscillatory behavior in enzymatic control processes, \emph{Adv. Enzyme Regul.} \textbf{3} (1965),  425--438.

\bibitem{Hale}
J. Hale,  Ordinary differential equations. Krieger Publishing Company, (1980).

\bibitem{Hasting1977}
S.P. Hasting, J. Tyson and D. Webster, Existence of periodic solutions for negative feedback cellular control systems, \emph{J. Differential Equations} \textbf{25} (1977),  39--64.

\bibitem{Hen} D. B. Henry, Some infinite dimensional Morse-Smale systems defined by parabolic partial differential equations, \emph{J. Differential Equations} \textbf{59} (1985), 165--205.

\bibitem{HiSm} M. W. Hirsch and H. Smith, Monotone dynamical systems, in: Handbook of Differential Systems (Ordinary Differential Equations), vol. 2, Elsevier, Amsterdam, (2005), 239--358.
\bibitem{Hirsch}
M. W. Hirsch, J. Palis, C. Pugh and M. Shub, Neighborhoods of hyperbolic sets, \emph{Invent. Math.} \textbf{9} (1970), 121--134.
\bibitem{Igo} O. A. Igoshin, A. Goldbeter, D. Kaiser and G. Oster, A biochemical oscillator explains several aspects of Myxococcus xanthus behavior during development,
\emph{Proc. Natl. Acad. Sci. U.S.A.} \textbf{101} (2004), 15760--15765.

\bibitem{JR1} R. Joly and G. Raugel, Generic hyperbolicity of equilibria and periodic orbits of the parabolic equation on the circle, \emph{Trans. Amer. Math. Soc.} \textbf{362} (2010), 5189-5211.

\bibitem{JR2} R. Joly and G. Raugel, Generic Morse-Smale property for the parabolic equation on the circle, \emph{Ann. Inst. H. Poincar¡äe, Anal. Non Lin¡äeaire} \textbf{27} (2010), 1397-1440.

\bibitem{Kra} M. A. Krasnoselskii, J. A. Lifschits and A. V. Sobolev, Positive linear systems, Heldermann Verlag, Berlin, 1989.

\bibitem{Mallet-Paret1990}
J.~Mallet-Paret and H.~Smith, The Poincare-Bendixson theorem for monotone
  cyclic feedback systems, \emph{J. Dynam. Differential Equations}
  \textbf{2} (1990),  367--421.

\bibitem{MalletParet1996}
J.~Mallet-Paret and G. Sell, Systems of differential delay
  equations: Floquet multipliers and discrete lyapunov functions, \emph{J. Differential Equations} \textbf{125} (1996),  385--440.

\bibitem{MuHo}
S. M\"{u}ller, J. Hofbauer, L. Endler, C. Flamm, S. Widder, P. Schuster,
A generalized model of the repressilator, \emph{J. Math. Biol.} \textbf{53} (2006), 905--937.

\bibitem{Palis} J. Palis and W. De Mello, Geometric theory of dynamical systems--An introduction,
Springer-Verlag, Berlin (1982).

\bibitem{Smi95} H. Smith, Monotone Dynamical Systems, Amer. Math. Soc., Providence, (1995).

\bibitem{San1} L. A. Sanchez, Cones of rank 2 and the Poincar\'{e}-Bendixson property for a new class
of monotone systems, \emph{J. Differential Equations} \textbf{216} (2009), 1170--1190.

\bibitem{San2}  L. A. Sanchez, Existence of periodic orbits for high-dimensional autonomous systems,
\emph{J. Math. Anal. Appl.} \textbf{363} (2010), 409-418.

\bibitem{Ton} A. Tonnelier, Cyclic negative feedback systems: what is the
chance of oscillation? \emph{Bull. Math. Biol.} \textbf{76} (2014), 1155-1193.

\bibitem{Tsai}
T. Y. Tsai, Y. S. Choi and W. Ma, J. R. Pomerening, C. Tang and J. E. Ferrell, Robust, tunable biological oscillations from interlinked positive and
negative feedback loops, \emph{Science} \textbf{321} (2008), 126-129.
\end{thebibliography}
\end{document}